\newcommand{\N}{{\mathbb N}}
\newcommand{\Z}{{\mathbb Z}}
\newcommand{\C}{{\mathbb C}}
\newcommand{\R}{{\mathbb R}}
\newtheorem*{mthm}{Main Theorem}
\newtheorem{thm}{Theorem}
\newtheorem{lem}{Lemma}
\newtheorem{cor}{Corollary}
\newtheorem{prop}{Proposition}
\newtheorem{rmk}{Remark}
\newtheorem{defn}{Definition}
\newtheorem{ex}{Example}
\newcommand{\thmref}[1]{Theorem~\ref{#1}}
\newcommand{\propref}[1]{Proposition~\ref{#1}}
\newcommand{\lemref}[1]{Lemma~\ref{#1}}
\newcommand{\corref}[1]{Corollary~\ref{#1}}
\begin{document}


\title[Multiple polylogarithms, regularisation and open domain of convergence]
{Multiple polylogarithms, a regularisation process and an admissible open domain of convergence}

\author{Pawan Singh Mehta \orcidlink{0009-0001-6717-9063} and Biswajyoti Saha\orcidlink{0009-0009-2904-4860}}

\address{Pawan Singh Mehta and Biswajyoti Saha\\ \newline
Department of Mathematics, Indian Institute of Technology Delhi, 
Hauz Khas, New Delhi 110016, India.}
\email{maz218521@maths.iitd.ac.in, biswajyoti@maths.iitd.ac.in}

\subjclass[2020]{11M32}

\keywords{multiple polylogarithms, open domain of convergences, translation formula,
regularisation process, generalised Euler-Boole summation formula}

\begin{abstract}
In this article, we study the analytic properties of the multiple polylogarithms in the $s$-aspect.
Although the domain of absolute convergence of the series defining the multiple polylogarithms
 is well-known, the study towards a
larger open domain of (conditional) convergence has been limited, particularly when the depth is $\ge 2$.
Here, we exhibit a larger open domain of (conditional) convergence for this series by writing certain translation
formulas satisfied by them. The series moreover defines a holomorphic function in this open set.
We then introduce a regularisation process for the multiple polylogarithms, extending an earlier
work of the second author. This regularisation process requires a generalisation of the Euler-Boole summation formula
that we derive in the appendix of this article. The regularisation process leads to a larger open domain, where
the series (conditionally) converges at integer points. The holomorphicity at such points is a more delicate
question and this regularisation process is to be used to study the local behaviour of the
multiple polylogarithms around such points.
\end{abstract}

\maketitle

\section{Introduction}

Throughout this article, we denote the set of all non-negative integers by $\N$.
Let $a$ be a positive integer and $z$ be a complex number with $\lvert z\rvert<1$. The classical
polylogarithm function is defined by the convergent power series 
$$
\mathrm{Li}(a;z):=\sum_{n>0}\frac{z^n}{n^a}.
$$
Convergence of such a series depends on $z$ and $a$ both. For example, we can take $\lvert z\rvert\leq1$,
if $a\geq 2$. However, for $\lvert z\rvert>1$, the series is not convergent.

For a fixed value of $z$ with $\lvert z\rvert\leq 1$, the polylogarithm functions can be considered as the
Dirichlet series with the integer $a$ replaced by a complex variable $s$ such that $\Re(s)>1$, i.e.
$$
\mathrm{Li}_z(s):=\sum_{n>0}\frac{z^n}{n^s}.
$$
Note that the above Dirichlet series converges (absolutely) for every complex number $s$ if $|z|<1$.
Now if $|z|=1$, the domain of convergence of this Dirichlet series depends on the value of $z$.
For $z=1$, we get back the ubiquitous Riemann zeta function, which converges (absolutely) for $\Re(s)>1$.
However, for $|z|=1$ with $z\neq 1$, the Dirichlet series $\mathrm{Li}_z(s)$ converges for $\Re(s)>0$.
This is immediate as the partial sum $\sum_{N>n>0}z^n=\frac{z-z^N}{1-z}$ is bounded. This also shows
that in this case, the Dirichlet series $\mathrm{Li}_z(s)$ does not converge for any $s$ with $\Re(s)<0$.

The multiple polylogarithms are defined by considering several variable analogue of the classical polylogarithm functions.
For positive integers $r$, $a_1, \ldots, a_r$ and complex number $z_1,\ldots, z_r$ with $|z_i|<1$ for all $1\leq i\leq r$,
the multiple polylogarithm (of depth $r$) is defined by the convergent power series
 $$
 \mathrm{Li}(a_1,\ldots,a_r;z_1,\ldots,z_r):=\sum_{n_1>\cdots>n_r>0}\frac{z_1^{n_1}\cdots z_r^{n_r}}{n_1^{a_1}\cdots n_r^{a_r}}.
 $$
Again, this definition can be extended for $|z_i|\leq 1$ for all $1\leq i\leq r$, if $a_1\geq2$.
Further, this series can be considered as a multiple Dirichlet series by fixing complex numbers
$z_1,\ldots, z_r$ and replacing $a_1,\ldots,a_r$ by complex variables $s_1,\ldots,s_r$
with $\Re(s_1+\cdots+s_i)>i$ for $1 \le i \le r$, i.e.,
\begin{equation}\label{mp}
\mathrm{Li}_{(z_1,\ldots,z_r)}(s_1,\ldots,s_r):=\sum_{n_1>\cdots>n_r>0}\frac{z_1^{n_1}\cdots z_r^{n_r}}{n_1^{s_1}\cdots n_r^{s_r}},
\end{equation}
where $(s_1, \ldots, s_r) \in U_r$ with
$$
U_r := \{ (s_1, \ldots, s_r) \in \C^r ~:~ \Re(s_1 + \cdots + s_i) > i
~\text{ for all }~ 1 \le i  \le r \}.
$$
The above series converges normally on compact subsets of $U_r$ and 
therefore defines a holomorphic function there (see \cite[Proposition 2]{BS1}).
The second author further showed in \cite[Theorem 9]{BS1} (albeit using different notations)
that the multiple Dirichlet series \eqref{mp} can be extended to a meromorphic function on the
whole of $\C^r$ and the set of singularities depends on 
the values of the product $z_{[1,i]}:=z_1 \cdots z_i$ for $1 \le i \le r$.

\begin{thm}[Saha]\label{poles-malf}
The function $\mathrm{Li}_{(z_1,\ldots,z_r)}(s_1,\ldots,s_r)$
extends to a meromorphic function on the whole of $\C^r$.
If $z_{[1,i]} \neq 1$ for all $1 \le i \le r$, then
$\mathrm{Li}_{(z_1,\ldots,z_r)}(s_1,\ldots,s_r)$ is holomorphic on $\C^r$.
Otherwise, if $i_1 < \cdots < i_m$ denote all the indices such that $z_{[1,i_j]}= 1$
for all $1 \le j \le m $, then the set of all possible singularities of
$\mathrm{Li}_{(z_1,\ldots,z_r)}(s_1,\ldots,s_r)$ can be given as follows:

$(a)$ If $i_1=1$, then $\mathrm{Li}_{(z_1,\ldots,z_r)}(s_1,\ldots,s_r)$ is
holomorphic outside the union of the hyperplanes given by the equations
$$
s_1=1; ~s_1 + \cdots+ s_{i_j}=n \ \text{ for all } n \in \Z_{\le j} \ \text{and }
2 \le j \le m.
$$

$(b)$ If $i_1 \not =1$, then $\mathrm{Li}_{(z_1,\ldots,z_r)}(s_1,\ldots,s_r)$ is
holomorphic outside the union of the hyperplanes given by the equations
$$
s_1 + \cdots+ s_{i_j}=n \text{ for all } n \in \Z_{\le j} \text{ and }
1 \le j \le m.
$$
\end{thm}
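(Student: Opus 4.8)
The plan is to argue by induction on the depth $r$, at each step peeling off the outermost summation variable $n_1$ and reducing the problem to a single one-variable ``tail'' together with a multiple polylogarithm of depth $r-1$ whose leading parameter is the \emph{product} $z_1z_2$. It is precisely this merging of the first two parameters that explains why the partial products $z_{[1,i]}$, rather than the individual $z_i$, control the location of the singularities.

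For $r=1$ the statement is classical: if $z_1=1$ then $\mathrm{Li}_{z_1}(s_1)=\zeta(s_1)$, which is meromorphic on $\C$ with a single simple pole at $s_1=1$ (this is case $(a)$ with $m=1$), while if $z_1\neq1$ the partial sums $\sum_{n\le N}z_1^n$ are bounded, so repeated summation by parts (or a Mellin transform together with a Hankel contour) continues $\mathrm{Li}_{z_1}(s_1)$ to an entire function. For $r\ge2$, fix $(s_1,\ldots,s_r)\in U_r$ and write
$$
\mathrm{Li}_{(z_1,\ldots,z_r)}(s_1,\ldots,s_r)=\sum_{n_2>\cdots>n_r>0}\frac{z_2^{n_2}\cdots z_r^{n_r}}{n_2^{s_2}\cdots n_r^{s_r}}\,g(s_1;n_2),\qquad g(s_1;m):=\sum_{n>m}\frac{z_1^{n}}{n^{s_1}}.
$$
The crucial input is a uniform asymptotic expansion of the tail $g(s_1;m)$ in descending powers of $m$: if $z_1=1$ then $g(s_1;m)=\zeta(s_1,m+1)$, and the Euler--Maclaurin formula gives $g(s_1;m)=\frac{(m+1)^{1-s_1}}{s_1-1}+\sum_{k=0}^{K}a_k(s_1)(m+1)^{-s_1-k}+O\!\left(m^{-\Re(s_1)-K-1}\right)$ with entire coefficients $a_k$; if $z_1\neq1$ then the generalised Euler--Boole summation formula derived in the appendix gives $g(s_1;m)=z_1^{\,m+1}\sum_{k=0}^{K}b_k(s_1)(m+1)^{-s_1-k}+O\!\left(m^{-\Re(s_1)-K-1}\right)$ with entire coefficients $b_k$ and, crucially, \emph{no} pole term. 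In both cases the error is uniform for $s_1$ in compact sets.

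Substituting this expansion back, the uniform remainder estimate justifies interchanging the summations and shows that the remainder after $K$ terms is a convergent multiple series, holomorphic on a region of the form $\{\Re(s_1+\cdots+s_i)>i-1-K\}$ that exhausts $\C^r$ as $K\to\infty$. Expanding $(n_2+1)^{-s_1-k}$ binomially in $1/n_2$, every remaining (finite) term is recognised as a multiple polylogarithm of depth $r-1$: for $z_1\neq1$ one gets a $\C$-linear combination of the functions $\mathrm{Li}_{(z_1z_2,\,z_3,\ldots,z_r)}(s_1+s_2+k,\,s_3,\ldots,s_r)$ with $k\ge0$, while for $z_1=1$ one gets, in addition, the terms $\frac{1}{s_1-1}\,\mathrm{Li}_{(z_2,\,z_3,\ldots,z_r)}(s_1+s_2-1+k,\,s_3,\ldots,s_r)$ with $k\ge0$. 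In either case the reduced $(r-1)$-tuple of parameters has partial products exactly $z_{[1,2]},z_{[1,3]},\ldots,z_{[1,r]}$, so the inductive hypothesis applies to each summand and produces the meromorphic continuation to all of $\C^r$; and if no $z_{[1,i]}$ equals $1$, no pole is ever created, since then $z_1\neq1$ (no prefactor $\frac1{s_1-1}$) and by induction the reduced terms are already entire.

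It remains to locate the hyperplanes. Let $i_1<\cdots<i_m$ be the indices with $z_{[1,i_j]}=1$. The reduced depth-$(r-1)$ problem has special indices $i_1-1<\cdots<i_m-1$ when $z_1\neq1$ (i.e.\ $i_1\neq1$) and $i_2-1<\cdots<i_m-1$ when $z_1=1$ (i.e.\ $i_1=1$); feeding these into the inductive pole list for depth $r-1$, and using that the arguments of the reduced function are $s_1+s_2+k$ (respectively $s_1+s_2-1+k$), $s_3,\ldots,s_r$, turns each condition of the form $t_1+\cdots+t_\ell\in\Z_{\le\ell}$ on the reduced arguments $t_1,t_2,\ldots$ into the hyperplane $s_1+\cdots+s_{i_j}\in\Z_{\le j}$, while the prefactor $\frac1{s_1-1}$, present precisely when $z_1=1$, supplies the hyperplane $s_1=1$; this reproduces case $(a)$ when $i_1=1$ and case $(b)$ when $i_1\neq1$. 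The main technical work is twofold, and both parts are where I expect the difficulty to lie: establishing the uniform-in-$s_1$ control of the remainder in the tail expansion --- which is exactly the role of the generalised Euler--Boole summation formula of the appendix --- and carrying out the combinatorial bookkeeping showing that the shift by $-1$ (from the Hurwitz-type term) and the shifts by $k\ge0$ (from the binomial corrections) combine to give precisely the ranges $n\le j$, and that no hyperplane outside the stated list is ever produced. An alternative that yields the same reduction is the Mellin--Barnes method: splitting $n_1=n_2+m_1$ and applying the Mellin--Barnes integral for $(n_2+m_1)^{-s_1}$ expresses $\mathrm{Li}_{(z_1,\ldots,z_r)}$ as a contour integral of $\frac{\Gamma(s_1+w)\Gamma(-w)}{\Gamma(s_1)}\,\mathrm{Li}_{z_1}(s_1+w)\,\mathrm{Li}_{(z_1z_2,\,z_3,\ldots,z_r)}(s_2-w,\,s_3,\ldots,s_r)$, whereupon shifting the contour and collecting residues shows that the singularities in $(s_1,\ldots,s_r)$ come from the pole of $\mathrm{Li}_{z_1}$ at $s_1+w=1$ (present only when $z_1=1$) and from the coalescence of the poles of $\Gamma(s_1+w)$ with those of the depth-$(r-1)$ factor.
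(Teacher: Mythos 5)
Your proposal is correct in outline, but note first that the paper itself does not prove Theorem~\ref{poles-malf}: it is quoted from \cite{BS1}, and the only trace of that proof visible here is \lemref{lem-trans-tail}, which the authors describe as an extension to partial tails of the translation formulas of \cite{BS1} on which the original argument rests. Measured against that method, your route is genuinely different in organisation. The argument of \cite{BS1} starts from the single identity $(n_1-1)^{1-s_1}-n_1^{1-s_1}=\sum_{k\ge0}\tfrac{(s_1-1)_{k+1}}{(k+1)!}n_1^{-s_1-k}$, telescopes it over $n_1$, and obtains a functional equation relating $(z_1-1)\,\mathrm{Li}_{(z_1,\ldots,z_r)}(s_1-1,s_2,\ldots,s_r)$ and $z_1\,\mathrm{Li}_{(z_{[1,2]},z_3,\ldots,z_r)}(s_1+s_2-1,s_3,\ldots,s_r)$ to the values at $s_1+k$, $k\ge0$; the continuation then proceeds by a double induction (on the depth and on unit translates of $s_1$), the dichotomy $z_1=1$ versus $z_1\ne1$ entering through the factor $z_1-1$ one divides by. You replace this one-step translation by the full asymptotic expansion of the tail $g(s_1;m)$ (Euler--Maclaurin when $z_1=1$, Euler--Boole type when $z_1\ne1$), which yields the continuation to an arbitrarily large region in a single pass per depth level; the two are close cousins (the translation identity is essentially the first term of your expansion), but your version makes the origin of the hyperplane $s_1=1$ and of the shifted arguments $s_1+s_2-1+k$ more immediately visible, at the cost of having to control the remainder uniformly in $s_1$ and to manage a double truncation (the Euler--Maclaurin order together with the binomial order in $(n_2+1)^{-s_1-k}$). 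Two small cautions, neither a gap in substance: the generalised Euler--Boole formula of Appendix~\ref{GEBS} is proved only for roots of unity, whereas the theorem allows arbitrary $z_i$ with $|z_i|\le1$, so for $z_1$ on the unit circle but not a root of unity you should obtain the expansion of $g(s_1;m)$ by iterated Abel summation instead (leading coefficient $1/(1-z_1)$, still with no pole in $s_1$); and your hyperplane bookkeeping should record explicitly that the union over $k\ge0$ of the conditions $s_1+\cdots+s_{i_j}\in\Z_{\le j-k}$ is exactly $\Z_{\le j}$, which is what makes the stated lists come out on the nose.
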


This indicates that there could be an open domain of convergence bigger than $U_r$
for the series in \eqref{mp}, while treating it as the limit
\begin{equation}\label{lim-mp}
\lim_{N \to \infty}\sum_{N>n_1>\cdots>n_r>0}\frac{z_1^{n_1}\cdots z_r^{n_r}}{n_1^{s_1}\cdots n_r^{s_r}},
\end{equation}
depending on the values of $z_i$, exactly as in the case of depth $r=1$. 
This question does not seem to have been addressed before.
If all the $z_i$'s are of unit modulus, the convergence of the series in \eqref{mp} outside $U_r$ will be conditional
convergence and hence we follow the above convention for the convergence going forward.
It is easy to see that for $s \in \C$ with $\Re(s)>1$, the series 
$\sum_{n_1>n_2>0}\frac{(-1)^{n_2}}{n_1^{s}}$ converges and the sum is $-2^{-s}\zeta(s)$.
One can also see that the series $\sum_{n_1>n_2>0}\frac{(-1)^{n_1}}{n_1}$ does not converge,
although the multiple Dirichlet series $\mathrm{Li}_{(-1,1)}(s_1,s_2)$ is holomorphic everywhere in
$\C^2$. Hence there are some inherent intricacies. This brings us to our first theorem. We need some notations.

For $r\geq 1$ an integer and ${\bf z}:=(z_1,\ldots,z_r) \in \C^r$  where $|z_i|\leq1$ for all $1\leq i\leq r$,
let $q({\bf z})$ be the smallest positive integer such that $1\leq q({\bf z})\leq r$ and the product $z_{[1,q({\bf z})]}\neq1$.
If no such $q({\bf z})$ exists, then we set $q({\bf z})=r+1$. We define an open set $U_r(\mathbf{z})$ of $\mathbb{C}^r$ as follows:
\begin{equation}\label{Urz}
\begin{split}
U_r(\mathbf{z}):=\{ & (s_1,\ldots,s_r)\in\mathbb{C}^r: \Re(s_1+\cdots+s_i)>i \text{ if } 1\leq i < q({\bf z}) \\
&\text{ and } \Re(s_1+\cdots+s_i)>i-1 \text{ if } q({\bf z})\leq i \leq r \} .
\end{split}
\end{equation}
Note that in general, we have $U_r \subseteq U_r(\mathbf{z})$. Further,  $U_r(\mathbf{z})=U_r$
if and only if $z_i=1$ for all $1\leq i\leq r$ (i.e., $q({\bf z})=r+1$).
In this context, we prove the following theorem.

\begin{thm}\label{big-dom}
With the notations and convention as above, the multiple Dirichlet series in \eqref{mp}
converges on $U_r(\mathbf{z})$. Moreover, it defines a holomorphic function on $U_r(\mathbf{z})$.
\end{thm}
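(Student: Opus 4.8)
The plan is to prove convergence on $U_r(\mathbf{z})$ by induction on the depth $r$, using summation by parts (Abel summation) in the outermost variable $n_1$ to reduce the depth, exactly mirroring the depth-one argument sketched in the introduction. The key observation is that for $\mathbf{z}$ with $q := q(\mathbf{z}) \le r$, the partial sums $\sum_{M \ge n_1 > 0} z_1^{n_1}$ are bounded when $z_1 \ne 1$; and more generally, after peeling off a few inner variables, what controls convergence is the boundedness of partial sums of $\sum z_{[1,i]}^{n}$, which holds precisely because $z_{[1,q]} \ne 1$.

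First I would set up the notation: write $S_N(\mathbf{s}) := \sum_{N > n_1 > \cdots > n_r > 0} \prod z_i^{n_i} n_i^{-s_i}$ and fix a compact subset $K \subset U_r(\mathbf{z})$. Since $U_r(\mathbf{z})$ is open, on $K$ we have uniform strict inequalities $\Re(s_1 + \cdots + s_i) \ge i + \delta$ for $i < q$ and $\Re(s_1 + \cdots + s_i) \ge i - 1 + \delta$ for $q \le i \le r$, for some $\delta > 0$. I would then split into two cases. If $q = r+1$ (i.e. all $z_i = 1$), then $U_r(\mathbf{z}) = U_r$ and the statement is precisely \cite[Proposition 2]{BS1}, so there is nothing to prove. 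If $q \le r$, I peel off the innermost sum over $n_r$: writing $n_{r-1} > n_r > 0$, the inner sum $\sum_{0 < n_r < n_{r-1}} z_r^{n_r} n_r^{-s_r}$ is a partial sum of $\mathrm{Li}_{z_r}(s_r)$; when $\Re(s_r)$ is large this is harmless, but the delicate regime is where we need the gained unit of real part. The cleanest route is instead to do the Abel summation on the \emph{outermost} variable: fix $n_2 > \cdots > n_r$ and write $\sum_{n_1 > n_2} z_1^{n_1} n_1^{-s_1} = \sum_{n_1 > n_2} (Z_1(n_1) - Z_1(n_1 - 1)) n_1^{-s_1}$ where $Z_1(M) = \sum_{n \le M} z_1^n$. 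Summation by parts moves the difference onto $n_1^{-s_1} - (n_1+1)^{-s_1} = O(n_1^{-\Re(s_1) - 1})$, and boundedness of $Z_1$ (using $z_1 \ne 1$ if $q = 1$; if $q > 1$, then $z_1 = 1$ and one carries the factor $z_1^{n_1} = 1$ forward, peeling instead at the first index $q$ where $z_{[1,q]} \ne 1$ after combining the telescoped exponentials $z_1^{n_1} z_2^{n_2} \cdots$). The combinatorial heart is the exponent bookkeeping: after the peel, the remaining $(r-1)$-fold (or $(r-q)$-fold) sum is a multiple polylogarithm whose associated $\mathbf{z}'$ and whose exponents satisfy the hypotheses needed to invoke the inductive hypothesis, because the "$+1$" gained from the $O(n_1^{-\Re(s_1)-1})$ decay exactly compensates the drop from $i$ to $i-1$ in the definition of $U_r(\mathbf{z})$.

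The hard part will be making the induction close cleanly: one must track how the partial-summation kernel interacts with the \emph{inner} summation bounds $n_1 > n_2 > \cdots$, since the upper limit of the inner sums is the running variable $n_2$, not infinity, so the tail estimates must be uniform in that limit. Concretely, after one Abel step one gets a main term of the shape $\sum_{n_2 > \cdots > n_r} \big(\text{bounded}\big) \cdot n_2^{-\Re(s_1)} \prod_{i \ge 2} |z_i|^{n_i} n_i^{-\Re(s_i)}$ together with an error sum with an extra $n_1^{-\Re(s_1)-1}$ factor summed over $n_1 > n_2$, which yields $O(n_2^{-\Re(s_1)})$ — so in both pieces the variable $n_2$ inherits an effective exponent $\Re(s_1 + s_2)$, and one checks this lands in $U_{r-1}(\mathbf{z}')$ for the appropriate truncated tuple $\mathbf{z}' = (z_1 z_2, z_3, \ldots, z_r)$ or $(z_2, \ldots, z_r)$ according to whether $z_1 = 1$. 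I would organize this as a single lemma on "shifted" multiple series and iterate. Finally, holomorphicity on $U_r(\mathbf{z})$ follows for free: each truncated partial sum $S_N(\mathbf{s})$ is entire, and the Abel-summation estimates above are uniform on compact subsets of $U_r(\mathbf{z})$, so $S_N \to \mathrm{Li}_{\mathbf{z}}$ uniformly on compacta and the limit is holomorphic by the Weierstrass theorem. I would flag the exponent-bookkeeping in the induction as the step most likely to hide a subtlety, and would double-check the boundary case $q = 1$ separately since there the very first inequality is already relaxed.
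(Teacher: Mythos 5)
Your overall strategy---summation by parts in the outermost variable $n_1$ to trade a unit of decay for a reduction in depth, followed by induction on $r$ and uniform-on-compacta estimates to get holomorphicity---is essentially the route the paper takes. The paper packages the partial summation as an exact translation formula (Lemma~\ref{lem-trans-tail}), obtained by telescoping $\frac{z_1^{n_1}}{(n_1-1)^{s_1-1}}-\frac{z_1^{n_1}}{n_1^{s_1-1}}$ and expanding this difference as $\sum_{k\ge 0}\frac{(s_1-1)_{k+1}}{(k+1)!}n_1^{-s_1-k}$, which handles the cases $z_1=1$ and $z_1\neq 1$ uniformly; your version with the partial sums $Z_1(M)=\sum_{n\le M}z_1^n$ is the same idea in Dirichlet-test clothing.

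There is, however, a concrete gap in your accounting of what the partial summation produces. Summing by parts over $n_2<n_1<N$ yields \emph{three} pieces, not two: besides the lower boundary term (your ``main term'', carrying $n_2^{-s_1}$ and reducing the depth) and the bulk term with the extra $n_1^{-\Re(s_1)-1}$ decay, there is an upper boundary term of the shape $Z_1(N-1)(N-1)^{-s_1}$ multiplied by the \emph{entire} inner partial sum $\sum_{N>n_2>\cdots>n_r>0}z_2^{n_2}n_2^{-s_2}\cdots z_r^{n_r}n_r^{-s_r}$. That inner partial sum is not bounded in $N$ in general---it can grow like a power of $N$ when some $\Re(s_i)$ are negative, which is precisely the regime $U_r(\mathbf{z})\setminus U_r$ you are trying to reach---so you must prove it is $o(N^{\Re(s_1)})$ uniformly on a neighbourhood of the given point, using the full set of inequalities defining $U_r(\mathbf{z})$. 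This is exactly the content of the paper's Corollary~\ref{cor-lem}, which requires its own induction on $r$; without some version of it your limit as $N\to\infty$ does not close. A secondary issue: in the case $q(\mathbf{z})>1$ you cannot ``peel at the first index $q$'' directly, since the variables are nested; one must iterate the depth reduction one index at a time, using for $z_1=1$ an Euler--Maclaurin-type expansion of $\sum_{n_2<n_1<N}n_1^{-s_1}$ whose leading term $n_2^{1-s_1}/(s_1-1)$ shifts the first exponent to $s_1+s_2-1$ and the first root to $z_{[1,2]}$---again producing an $N^{1-s_1}$ upper boundary term to control. Both issues are repairable, but they constitute the actual technical heart of the proof rather than the exponent bookkeeping you flagged.
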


This theorem therefore extends the known result for the depth $1$ Dirichlet series $\mathrm{Li}_z(s)$.
Also, in the last example, note that the point $(1,0)$ lies on the boundary of $U_2(-1,1)$.
However, this open domain of convergence is not optimal in general and we can do better.
For example, as $N$ tends to $\infty$, one has
\begin{equation}\label{ex-0}
\sum_{N>n_1>n_2>0}\frac{(-1)^{n_2}}{n_1^{2}n_2^{-1}}=\frac{\log 2}{2}-\frac{\pi^2}{16}+o(1).
\end{equation}
Clearly, $(2,-1) \notin U_2(1,-1)$. In fact, as we can see, for $s \in \C$ with $\Re(s)>1$, the limit
$$
\lim_{N \to \infty} \sum_{N>n_1>n_2>0}\frac{(-1)^{n_2}}{n_1^{s}n_2^{-1}}
$$
exists. Our next aim in this article is therefore to establish a more accurate and larger open domain
where the series in \eqref{mp} (viewed as the limit in \eqref{lim-mp}) converges at such interesting integer points.
To do this we need to set up a regularisation process for the multiple polylogarithms (at roots of unity), following the work
of the second author \cite{BS2}. This is done in the next section. Using this regularisation process, for roots of unity $z_1,\ldots,z_r$,
we can give a more accurate and larger open domain where the multiple Dirichlet series $\mathrm{Li}_{(z_1,\ldots,z_r)}(s_1,\ldots,s_r)$
converges at integer points. The optimality of this admissible open domain of convergence can be explained
in light of the local behaviour of the multiple polylogarithms at integral points, which is going to appear in a forthcoming article \cite{MS}.

To state our theorem in this context, we will need a few more notations. Let $r\geq 1$ be an integer and $\mathbf{z}=(z_1,\ldots,z_r) \in \C^r$
be such that $z_1,\ldots,z_r$ are roots of unity. For $1 \le i \le j \le r$, consider the products $z_{[i,j]}:=z_i \cdots z_j$ and 
for each $1 \le j \le r$, define
\begin{equation}\label{I-Q}
I_j(\mathbf{z}):=\{i : 1 \le i \le j \text{ and } z_{[i,j]}=1\} \ \text{ and } \ Q_j(\mathbf{z}):=|I_j(\mathbf{z})|,
\end{equation}
the number of elements in $I_j(\mathbf{z})$.
Note that in general there is no comparing relations among various $Q_j(\mathbf{z})$ for $1 \le j \le r$.
Consider the open set
\begin{equation}\label{Vrz}
V_r(\mathbf{z})=\{(s_1,\ldots,s_r)\in \C^r : \Re(s_1+\cdots+s_i)>Q_i(\mathbf{z}) \text{ for all } 1\leq i\leq r\}.
\end{equation}
It is easy to see that $V_r(\mathbf{z})=U_r$ if and only if $\mathbf{z}=(1,\ldots,1)$. Moreover,
$U_r(\mathbf{z}) \subseteq V_r(\mathbf{z})$. This follows from the fact that
$Q_i(\mathbf{z}) \le i$ for all $1 \le i \le r$ and further $Q_i(\mathbf{z}) \le i-1$ for all $q(\mathbf{z}) \le i \le r$.
Note that $U_2(-1,1)=V_2(-1,1)$ and $U_2(-1,-1)=V_2(-1,-1)$, but $U_2(1,-1) \subsetneq V_2(1,-1)$,
as $(2,-1) \in V_2(1,-1) \setminus U_2(1,-1)$. Moreover, the special value of the series
ar $(2,-1)$ is $\frac{\log 2}{2}-\frac{\pi^2}{16}$ (see \eqref{ex-0}).
In this regard, we prove the following theorem.

\begin{thm}\label{ad-dom}
Let $r \ge 1$ and $z_1,\ldots,z_r \in \C$ be roots of unity. The multiple Dirichlet series
converges at integral points of $V_r(\mathbf{z})$. Moreover, for an integral point $(s_1,\ldots,s_r) \in V_r(\mathbf{z})$
and $k_1,\ldots, k_r\in \mathbb{N}$, the series
\begin{equation}\label{mp-der}
\sum_{n_1>\cdots>n_r>0}\frac{z_1^{n_1}(\log n_1)^{k_1}\cdots z_r^{n_r}(\log n_r)^{k_r}}{n_1^{s_1}\cdots n_r^{s_r}}
\end{equation}
converges when we consider it as the limit
\begin{equation}\label{lim-mp-der}
\lim_{N \to \infty}\sum_{N>n_1>\cdots>n_r>0}\frac{z_1^{n_1}(\log n_1)^{k_1}\cdots z_r^{n_r}(\log n_r)^{k_r}}{n_1^{s_1}\cdots n_r^{s_r}}.
\end{equation}
\end{thm}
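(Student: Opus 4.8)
The plan is to prove Theorem~\ref{ad-dom} by induction on the depth $r$, using a summation-by-parts (Abel summation) argument on the outermost variable $n_1$, combined with the generalised Euler--Boole summation formula from the appendix to control the resulting oscillatory sums. Throughout, it suffices to treat the case of general $k_1,\ldots,k_r \in \N$ in \eqref{mp-der}, since the case $k_i=0$ is the special case relevant to the plain series; in fact carrying the logarithmic weights along is necessary to make the induction close, because Abel summation naturally produces derivatives in the $s_i$'s, i.e. extra powers of $\log n_i$. First I would fix an integral point $(s_1,\ldots,s_r)\in V_r(\mathbf{z})$, so $\Re(s_1+\cdots+s_i)=s_1+\cdots+s_i > Q_i(\mathbf{z})$ for all $i$, and set $a_i := s_i \in \Z$.

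The inductive step: write the inner sum over $n_2>\cdots>n_r>0$ (with $n_2 < n_1$) using the induction hypothesis applied to the depth-$(r-1)$ data $(z_2,\ldots,z_r)$, but now with the truncation $n_2 < n_1$. Concretely, for fixed $n_1$ put
\begin{equation}\label{pp-inner}
A(n_1) := \sum_{n_1>n_2>\cdots>n_r>0}\frac{z_2^{n_2}(\log n_2)^{k_2}\cdots z_r^{n_r}(\log n_r)^{k_r}}{n_2^{s_2}\cdots n_r^{s_r}}.
\end{equation}
The key analytic input, to be extracted from the proof of Theorem~\ref{big-dom} together with the regularisation of \cite{BS2} and the generalised Euler--Boole formula, is a uniform asymptotic expansion
\begin{equation}\label{pp-asymp}
A(n_1) = P(\log n_1) + \sum_{\ell} c_\ell \, \frac{(\log n_1)^{m_\ell}}{n_1^{t_\ell}} \, \omega_\ell^{n_1} + o(1),
\end{equation}
as $n_1\to\infty$, where $P$ is a polynomial (the ``constant term'', present precisely when $z_{[2,j]}=1$ for the relevant inner indices, i.e. governed by the $Q_j$'s of the truncated tuple), and the remaining terms have $\Re(t_\ell)\ge \max(1, \text{something})$ with $\omega_\ell$ roots of unity, $\omega_\ell \ne 1$ whenever $t_\ell$ is too small. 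Then \eqref{mp-der} becomes $\sum_{n_1>0} \frac{z_1^{n_1}(\log n_1)^{k_1}}{n_1^{s_1}} A(n_1)$, which splits into: (i) the main piece $\sum_{n_1} \frac{z_1^{n_1}(\log n_1)^{k_1}}{n_1^{s_1}} P(\log n_1)$, a finite combination of depth-one series $\sum \frac{z_1^{n_1}(\log n_1)^{m}}{n_1^{s_1}}$ — these converge by Abel summation since $s_1 > Q_1(\mathbf{z})$ forces either $s_1\ge 1$ with $z_1\ne 1$, or $s_1\ge 2$, using that $\sum_{n\le N} z_1^n$ is bounded in the former case; (ii) the pieces $\sum_{n_1} \frac{(z_1\omega_\ell)^{n_1}(\log n_1)^{k_1+m_\ell}}{n_1^{s_1+t_\ell}}$, which converge absolutely or again by Abel summation because $\Re(s_1+t_\ell)$ is large enough (here one checks $s_1 + t_\ell$ exceeds the critical exponent using $s_1 > Q_1(\mathbf{z})$ and the bound on $t_\ell$, distinguishing the cases $z_1\omega_\ell = 1$ or not); and (iii) the tail $\sum_{n_1}\frac{z_1^{n_1}(\log n_1)^{k_1}}{n_1^{s_1}} o(1)$, which converges by Dirichlet's test once $\sum_{n_1\le N} \frac{z_1^{n_1}(\log n_1)^{k_1}}{n_1^{s_1}}$ is shown bounded — again from $s_1 > Q_1(\mathbf{z})$.

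The main obstacle, and where the bulk of the work lies, is establishing the uniform expansion \eqref{pp-asymp} with explicit control on which exponents $t_\ell$ and roots of unity $\omega_\ell$ can occur, and in particular showing that the polynomial part $P$ is governed exactly by the combinatorics of the sets $I_j(\mathbf{z})$ and the numbers $Q_j(\mathbf{z})$ — this is precisely the point where the regularisation process of Section~2 and the generalised Euler--Boole summation formula of the appendix are needed, since $A(n_1)$ as a function of the upper cutoff $n_1$ is itself a ``partial multiple polylogarithm'' whose asymptotics at non-dominant integer arguments is delicate. A secondary subtlety is bookkeeping: at each inductive stage Abel summation differentiates in the $s_i$-variables, so one must set up the induction hypothesis to already include all logarithmic weights $(\log n_i)^{k_i}$ with arbitrary $k_i\in\N$ (as in the statement), and verify that the condition ``$(s_1,\ldots,s_r)\in V_r(\mathbf{z})$ integral'' is stable under the operations used — restricting to the tuple $(z_2,\ldots,z_r)$ and checking that $(s_2,\ldots,s_r)$, together with the extra truncation, lands in the region where the induction hypothesis applies. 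One then assembles these pieces and invokes holomorphicity/convergence of the depth-one building blocks to conclude convergence of the limit \eqref{lim-mp-der}.
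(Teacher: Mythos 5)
Your outline matches the paper's strategy in its broad strokes: induction on the depth, an asymptotic expansion of the truncated inner sum $A(n_1)$, multiplication by the outer factor, and a reduction to depth-one oscillatory sums controlled by the generalised Euler--Boole formula. But there are two genuine gaps. First, the statement you call ``the key analytic input'' \eqref{pp-asymp} --- the uniform expansion of $A(n_1)$ with the exponents $t_\ell$ and roots of unity $\omega_\ell$ pinned down by the quantities $Q_j$ --- is not something that can be ``extracted'' from Theorem~\ref{big-dom} or quoted from \cite{BS2}; it \emph{is} the theorem, in the form of \propref{Extension}: the order of the expansion of the depth-$j$ truncated sum is at least $\min\bigl(0, A_{[1,1]}-Q_{[1,1]},\ldots,A_{[1,j]}-Q_{[1,j]}\bigr)$, with the oscillation attached to the $i$-th block being exactly $z_{[1,i]}^{n}$. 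Your induction as literally set up (inducting on the convergence statement of \thmref{ad-dom}) does not close, because at an integral point of $V_r(\mathbf{z})$ the truncated inner data $(s_2,\ldots,s_r)$ need not lie in $V_{r-1}(z_2,\ldots,z_r)$, so the inner sum need not converge at all --- it can grow polynomially. The induction must therefore be run on the strengthened expansion statement at \emph{arbitrary} integer points, and proving that statement (via \lemref{r=1}, \rmkref{Rzeta} and the identity $Q_{[1,i]}=Q_{[2,i]}+q_{[1,i]}$) is the bulk of the argument, not a preliminary.

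Second, your treatment of the remainder is incorrect as stated: an $o(1)$ error in \eqref{pp-asymp} is not enough to handle the tail $\sum_{n_1} z_1^{n_1}(\log n_1)^{k_1} n_1^{-s_1}\,o(1)$. Dirichlet's test requires a monotone factor tending to zero, not merely an $o(1)$ factor; indeed $\sum_n b_n\varepsilon_n$ can diverge even when $\sum_{n\le N}b_n$ is bounded and $\varepsilon_n=o(1)$ (take $b_n=(-1)^n$ and $\varepsilon_n=(-1)^n/\log n$). Since $s_1$ may be as small as $1$, or even nonpositive when compensated by later coordinates, this piece is not absolutely convergent. The fix --- and the reason the regularisation framework of Section~2 insists on expansions \emph{to arbitrary precision} --- is to carry the remainder at precision $o(n_1^{-A'})$ with $A'$ chosen large relative to $a_1$, so that after multiplying by $n_1^{-a_1}(\log n_1)^{k_1}$ the error term is absolutely summable and contributes only a constant plus a negligible tail. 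Your argument needs this quantitative remainder built into the induction hypothesis from the start.
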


Our proof of \thmref{big-dom} uses translation formulas satisfied by the partial tails of the
series in \eqref{mp}. A prototype of such translation formulas can be found in \cite[Theorems 6, 7]{BS1} (which was
derived for the whole sum). The proof of  \thmref{big-dom} is given in Section 3.

But this method does not seem to yield \thmref{ad-dom}. For a proof of \thmref{ad-dom}, we need to set up
a regularisation process for the multiple polylogarithms (at the roots of unity). The regularisation process allows us 
to assign a (regularised) value of the series in \eqref{mp-der}, even when it does not converge. But more importantly,
this regularised value would be equal to the sum of the series in \eqref{mp-der} whenever it converges.
To give an example, consider the series $\sum_{n >0} (-1)^n$. This series does not converge,
but we can write, for any positive integer $N$,
$$
\sum_{N>n>0}{(-1)^n}=-\frac{(-1)^N}{2}-\frac{1}{2}.
$$
Hence, we set the regularised value of this series to be $-1/2$. The details of this regularisation process is
discussed in the next section (Section 2), which is of independent interest and its other important implications
are explored in our forthcoming article \cite{MS}. Although, we need to build on the work of the second author
\cite{BS2}, there are key differences that we encounter here. 
For example, together with the Euler-Maclaurin summation formula, we need an extension of the Euler-Boole summation formula.
A structural proof of the Euler-Boole summation formula along with a proof of the Euler-Maclaurin summation formula
was presented in \cite{BCM-1} by Borwein, Calkin and Manna.
We follow their exposition to get a generalised Euler-Boole summation formula for the roots of unity.
This does not seem to have been done before. We have added it as an appendix to this article to keep our exposition
independent to the extent possible.
Finally, we give the proof of
\thmref{ad-dom} in Section 4. We remark here that even though we have convergence of the series
\eqref{mp-der} in $V_r({\bf z}) \cap \Z^r$, the holomorphicity of $\mathrm{Li}_{(z_1,\ldots,z_r)}(s_1,\ldots,s_r)$
is a more delicate question as one may not have uniform convergence of \eqref{mp}
in the domain $V_r({\bf z})$. The question of holomorphicity of $\mathrm{Li}_{(z_1,\ldots,z_r)}(s_1,\ldots,s_r)$
is also explored in \cite{MS}, by means
of understanding the local behaviour of the multiple polylogarithms at integral points.

It is important to mention that a structured study of the multiple polylogarithms was initiated by
Goncharov in 1990's. A detailed account of that can be found in \cite{ABG-2} and the
references therein. As we develop the regularisation process of the multiple polylogarithms
at the roots of unity, we refer the reader to \cite{ABG-1,BBBL,JZ-4} for some existing literature on the
special values of the multiple polylogarithms at the roots of unity. Various analytic properties of the multiple
polylogarithms, including the analytic continuation as functions of $z_1,\ldots,z_r$, have also been
studied (see \cite[\S2]{ABG-2}, \cite{JZ-2}).

\section{Regularisation process for the special values of the multiple polylogarithms and the higher order derivatives}
To define a regularisation process for the special values of the multiple zeta functions, in \cite{BS2}, the second author
needed the concept of a comparison scale and asymptotic expansion of a sequence of complex numbers
relative to the given comparison scale. Here we need the concept of asymptotic expansion of a sequence
of complex numbers relative to the given comparison scale {\it with variable coefficients}, as given in
\cite[Chap. V, \S2.5]{NB}. 

In what follows, $\N$ denotes the set of non-negative integers. Let $\mathcal{E}$ be the comparison scale,
on the set $\N$, filtered by the Fr\'echet filter, formed by the sequences 
$$
\left( (\log n)^ln^{-m} \right)_{n \geq 1},
$$
where $l \in \N$ and $m \in \Z$. Also, let $r$ be a positive integer and $z_1,\ldots,z_r \in \C$ be roots of unity.
Let $\mathcal{C}$ be the $\C$-algebra generated by the constant sequence $(1)_{n\geq 1}$ and the sequences
of the form $(z_i^n)_{n\geq 1}$ for $1 \le i \le r$.
Note that as a $\C$-vector space, $\mathcal C$ is finite dimensional since the set
$$
\mathcal S:=\{z_1^{k_1} \cdots z_r^{k_r}: k_i \in \N \text{ for } 1 \le i \le r\}
$$
is finite. For every element $\xi \in \mathcal S$, there are infinitely many $(k_1, \ldots, k_r) \in \N^r$
such that $\xi=z_1^{k_1} \cdots z_r^{k_r}$. By the exponent of $\xi$, we mean the smallest such
element in $\N^r$, as per the dictionary ordering. Let $S$ denote the set of exponents of $\mathcal S$.
Then $\mathcal C$ has a basis of the form
\begin{equation}\label{basis}
\mathcal B=\{ (z_1^{k_1 n} \cdots z_r^{k_r n})_{n \ge 1} : (k_1, \ldots, k_r) \in S\},
\end{equation}
As $(1)_{n \ge 1} \in \mathcal B$, we have $(0, \ldots,0) \in S$. It is immediate that $\mathcal B$
is a spanning set for $\mathcal C$. Moreover,
as we are only taking $(k_1, \ldots, k_r) \in S$, we get the linear independence
using a Vandermonde determinant trick. The $\C$-algebra $\mathcal{C}$ satisfies the following conditions:
\begin{enumerate}

\item[(a)] for every sequence $(a_n)_{n\geq 1} \in \mathcal{C}$, $(a_n)_{n\geq 1}$ is a bounded sequence;

\item[(b)] if $(a_n)_{n\geq 1} \in \mathcal{C}$ with $a_n=o(1)$ for sufficiently large $n$,
then we must have $a_n=0$ for all but finitely many $n$.

    
\end{enumerate}
This allows us to define the concept of asymptotic expansion of a sequence
of complex numbers relative to $\mathcal E$ with coefficients in $\mathcal C$ (see \cite[Chap. V, \S2.5]{NB}).

\begin{defn}\label{def-as-exp}
We say that a complex sequence $(u_n)_{n \geq 1}$ has an asymptotic expansion relative
to the comparison scale $\mathcal{E}$ with coefficients in $\mathcal{C}$ to arbitrary precision if 
there exists a family of sequences $\left( (a_n^{(l,m)})_{n\geq 1}\right)_{l\in\N,m\in\Z}$ in $\mathcal{C}$
such that we can write
$$
u_n=\sum_{l \geq 0, \hspace{0.5mm} m \leq A}a_{n}^{(l,m)}(\log n)^ln^{-m}+o(n^{-A}) \text{ as } n \to\infty,
$$
for any integer $A$, where for $m$ sufficiently small, these sequences are the constant sequence $(0)_{n\geq 1}$,
and for any $m$, these sequences are the constant sequence $(0)_{n\geq 1}$ for all but finitely many $l \in \N$.
The smallest $m$ such that a sequence of the form $\left(a_n^{(l,m)}\right)_{n\geq 1}$ is non-zero for some $l \in \N$, is called
the order of the asymptotic expansion.
\end{defn}

If a complex sequence $(u_n)_{n \geq 1}$ has an asymptotic expansion relative to $\mathcal{E}$
with coefficients in $\mathcal{C}$ to arbitrary precision, then this expansion is unique
(see \cite{NB}, Chap $V$, \S2.5). Now we consider the sequence $\big(a_{n}^{(0,0)}\big)_{n \ge 1}$ and we can
write it as a linear combination of elements of $\mathcal B$ as follows:
$$
\left(a_{n}^{(0,0)}\right)_{n \ge 1}=\sum_{(k_1,\ldots,k_r) \in S} \lambda_{(k_1,\ldots,k_r)} (z_1^{k_1 n} \cdots z_r^{k_r n})_{n \ge 1},
$$
where $\lambda_{(k_1,\ldots,k_r)} \in \C$.

\begin{defn}\label{def-reg-val}
Let $(u_n)_{n \geq 1}$ be a complex sequence having an asymptotic expansion relative to $\mathcal{E}$
with coefficients in $\mathcal{C}$ to arbitrary precision. The complex number $\lambda_{(0,\ldots,0)}$
is called the regularised value of the sequence $(u_n)_{n \geq 1}$ (relative to $\mathcal E$ and $\mathcal C$).
\end{defn}

Note that if two sequences differ by only finitely many terms and one of them has
an asymptotic expansion relative to $\mathcal E$ with coefficients in $\mathcal{C}$ to arbitrary precision,
then the other one also has such an expansion and their asymptotic expansions are the same. This
observation allows us to extend Definition \ref{def-reg-val} to sequences $(u_n)$ which are only
defined for $n$ large enough.

\begin{ex}\rm
Let $\mathcal C_1$ be the $\C$-algebra generated by the constant sequence $(1)_{n\geq 1}$ and sequence $((-1)^n)_{n\geq 1}$.
For the sequences, $(u_n)_{n\ge 1}=\sum_{n>m>0}{(-1)^m}$ and $(v_n)_{n\ge 1}=\sum_{n>m>0}{(-1)^m \log m}$, we have
$$
u_n=-\frac{(-1)^n}{2}-\frac{1}{2} \ \text{ and } \ v_n= -\frac{(-1)^n \log n}{2} + \frac{\log(\pi/2)}{2} + o(1), \text{ as } n \to \infty.
$$
Hence the regularised value of the sequences $(u_n)_{n \geq 1}$ and
$(v_n)_{n \geq 1}$ (relative to $\mathcal E$ and $\mathcal C_1$) are $-1/2$ and $\frac{\log(\pi/2)}{2}$, respectively.
\end{ex}

These examples are in the spirit of the following proposition that we need in the context of the regularisation of the
special values of the multiple polylogarithms. The proposition below can be seen as an extension of \cite[Proposition 1]{BS2}.

\begin{prop}\label{partial sum}
Let $(u_n)_{n\geq1}$ be a sequence of complex numbers having an asymptotic expansion
to arbitrary precision relative to $\mathcal{E}$ with coefficients in $\mathcal{C}$.
Then the sequence $(v_n)_{n\geq1}$ defined by $ v_n:=\sum_{n>m>0}u_m $ also has
an asymptotic expansion to arbitrary precision relative to $\mathcal{E}$ with coefficients in $\mathcal{C}$.
\end{prop}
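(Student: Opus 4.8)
The plan is to reduce everything to two ingredients: (i) the Euler–Maclaurin and generalised Euler–Boole summation formulas applied termwise to the asymptotic expansion of $(u_n)$, and (ii) the closure of the class of sequences having an asymptotic expansion relative to $\mathcal{E}$ with coefficients in $\mathcal{C}$ under the operation of forming partial sums. By hypothesis we may write, for each integer $A$,
$$
u_n = \sum_{l \ge 0,\ m \le A} a_n^{(l,m)} (\log n)^l n^{-m} + o(n^{-A}),
$$
with $a_n^{(l,m)} \in \mathcal{C}$, only finitely many terms present for each $A$. Since $\mathcal{C}$ has the finite basis $\mathcal{B}$, each $a_n^{(l,m)}$ is a finite $\C$-linear combination of sequences $(\xi^n)_{n \ge 1}$ with $\xi = z_1^{k_1}\cdots z_r^{k_r}$ a root of unity (including $\xi = 1$). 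By linearity it therefore suffices to produce, for a fixed root of unity $\xi$ and fixed $l \in \N$, $m \in \Z$, an asymptotic expansion to arbitrary precision (relative to $\mathcal{E}$, coefficients in $\mathcal{C}$) of the partial sums $\sum_{n > p > 0} \xi^p (\log p)^l p^{-m}$, together with control of the $o(n^{-A})$ error term, i.e. showing $\sum_{n > p > 0} \varepsilon_p$ has such an expansion whenever $\varepsilon_p = o(p^{-A})$ for every $A$ — this last point is immediate since the tail $\sum_{p \ge n}\varepsilon_p = o(n^{-A+1})$ contributes only to the error, and the head is a constant.

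For the main term I would split into the case $\xi = 1$ and $\xi \ne 1$. When $\xi = 1$, the function $f(x) = (\log x)^l x^{-m}$ is smooth on $[1,\infty)$ and the Euler–Maclaurin summation formula gives
$$
\sum_{p=1}^{n-1} f(p) = \int_1^n f(x)\,dx + \text{(boundary terms in } f^{(j)}(n)) + C + \text{(remainder)},
$$
where the integral $\int_1^n (\log x)^l x^{-m}\,dx$ is, by repeated integration by parts, a $\C$-linear combination of terms $(\log n)^{l'} n^{1-m}$ plus a constant (with a logarithmic correction $(\log n)^{l+1}/(l+1)$ when $m = 1$), the boundary terms are of the form $(\log n)^{l'} n^{-m-j}$, the constant $C$ is absorbed into the $(0,0)$-coefficient, and the remainder, after expanding $f^{(2M)}$ far enough, is $o(n^{-A})$ for any prescribed $A$ by taking $M$ large. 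All coefficient sequences arising here are constant, hence in $\mathcal{C}$, so this yields the desired expansion in the $\xi = 1$ case. When $\xi \ne 1$ is a root of unity, the relevant tool is the generalised Euler–Boole summation formula derived in the appendix: it expresses $\sum_{n > p > 0} \xi^p f(p)$ as a combination of $\xi^n f^{(j)}(n)$-type boundary terms (the coefficient sequences $(\xi^n)_{n\ge1}$ lying in $\mathcal{C}$ by construction), a constant term, and a remainder integral that is again $o(n^{-A})$ for arbitrary $A$ once $f$ is differentiated enough times; crucially there is no "growing" $\int f$ term here because the oscillating factor $\xi^p$ makes the sum converge at order $n^{-m}$. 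Assembling the two cases and invoking uniqueness of the asymptotic expansion completes the proof.

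The main obstacle I anticipate is the bookkeeping needed to guarantee that, after differentiating $f$ sufficiently many times in the remainder term of each summation formula, the resulting error is genuinely $o(n^{-A})$ \emph{uniformly} in the finitely many $(l,m)$ pairs occurring up to precision $A$, and that the finitely-many-nonzero-coefficients conditions in Definition~\ref{def-as-exp} are preserved — in particular, one must check that for each target exponent only finitely many powers of $\log n$ appear, which follows because integrating $(\log x)^l x^{-m}$ raises the power of $\log$ by at most one per integration by parts and $l$ is bounded at each stage. A secondary, purely technical point is the $m = 1$ resonance in the $\xi = 1$ case, where $\int_1^n x^{-1}(\log x)^l\,dx = (\log n)^{l+1}/(l+1)$ produces a logarithm of one higher degree than naively expected; this is harmless for the class $\mathcal{E}$ (which already allows arbitrary powers of $\log n$) but must be tracked so that the claimed expansion is stated correctly. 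Everything else is a routine, if lengthy, application of the two summation formulas term by term.
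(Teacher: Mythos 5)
Your proposal is correct and follows essentially the same route as the paper: reduce by linearity over the basis of $\mathcal{C}$ to the three cases of a constant-coefficient term $(\log n)^l n^{-m}$ (handled by Euler--Maclaurin, as in \cite[Proposition 1]{BS2}), an oscillating term $\xi^n(\log n)^l n^{-m}$ with $\xi$ a root of unity (handled by the generalised Euler--Boole formula \eqref{MFormula}), and a summable error term. The extra details you supply (the $m=1$ logarithmic resonance, the membership of the boundary coefficients $(\xi^n)_{n\ge1}$ in $\mathcal{C}$, and the treatment of the $o$-term via its convergent tail) are exactly the points the paper leaves implicit.
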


\begin{proof}
It is enough to show that the sequence $(v_n)_{n \ge 1}$ has an asymptotic expansion
to precision $n^{-A}$ relative to $\mathcal{E}$ with coefficients in $\mathcal{C}$, for any integer $A \ge 1$.
For $(u_n)_{n\geq1}$, we have a family of sequences $\left( (a_n^{(l,m)})_{n\geq 1}\right)_{l\in\N,m\in\Z}$
in $\mathcal{C}$ such that 
$$
u_n=\sum_{l\geq0, \hspace{0.5mm} m\leq A+1}a_n^{(l,m)}(\log n)^ln^{-m}+o(n^{-A-1}),
$$ 
as $n$ tends to $\infty$, for every $A\in\Z$. It is therefore enough to establish a 
suitable asymptotic expansion for $(v_n)_{n \ge 1}$ in the following three cases:
\begin{enumerate}
\item $u_n=(\log n)^ln^{-m}$ for $l\in\N,m\in\Z$;
\item $u_n=o(n^{-A-1})$ as $n\rightarrow\infty$;
\item $u_n=z^n(\log n)^ln^{-m}$ for $l\in\N,m\in\Z$ and $z=z_1^{k_1}\cdots z_r^{k_r}$ for some $k_1,\ldots,k_r \in \N$.
\end{enumerate}

Cases (1) and (2) follow from \cite[Proposition 1]{BS2}.
In case (3), we apply the generalised Euler-Boole summation formula (see \eqref{MFormula} in Appendix A).
Hence, we just need to note that the derivatives of the functions on $(1, \infty)$ of the form
$f_{(l,m)}(t)=(\log t)^lt^{-m}$, for $l\in\N,m\in\Z$, has an asymptotic expansion to the precision $n^{-A}$
relative to $\mathcal{E}$. This completes the proof.
\end{proof}

Now as an immediate corollary, we derive the following theorem.

\begin{thm}\label{Aysm}
For any $a_1,\ldots, a_r \in \mathbb{Z}$ and $k_1,\ldots, k_r\in \mathbb{N}$, the sequence $(u_n)_{n > r}$, where
$$
u_n:=\sum_{n>n_1>\cdots>n_r>0}\frac{z_1^{n_1}(\log n_1)^{k_1}\cdots z_r^{n_r}(\log n_r)^{k_r}}{n_1^{a_1}\cdots n_r^{a_r}},
$$
has an asymptotic expansion to the arbitrary precision relative to $\mathcal{E}$ with coefficients in $\mathcal{C}$.
\end{thm}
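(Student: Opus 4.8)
The plan is to argue by induction on the depth $r$, with Proposition \ref{partial sum} carrying out the main step. For $r=1$, the summand $m\mapsto z_1^m(\log m)^{k_1}m^{-a_1}$ is \emph{already} an asymptotic expansion to arbitrary precision relative to $\mathcal E$ with coefficients in $\mathcal C$: it is the single term $(\log m)^{k_1}m^{-a_1}\in\mathcal E$ with coefficient the sequence $(z_1^m)_{m\ge 1}\in\mathcal C$, all other coefficient sequences being $(0)_{m\ge1}$ and the error identically zero. Hence Proposition \ref{partial sum} applies verbatim and yields the statement for $r=1$.

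Assume now the statement in depth $r-1$, for all integer exponents, all non-negative integer log-powers, and all choices of roots of unity. Peeling off the outermost variable, write
\[
u_n=\sum_{n>n_1>0}\frac{z_1^{n_1}(\log n_1)^{k_1}}{n_1^{a_1}}\,w_{n_1},\qquad
w_{n_1}:=\sum_{n_1>n_2>\cdots>n_r>0}\frac{z_2^{n_2}(\log n_2)^{k_2}\cdots z_r^{n_r}(\log n_r)^{k_r}}{n_2^{a_2}\cdots n_r^{a_r}},
\]
with $w_{n_1}:=0$ for $n_1\le r-1$. By the induction hypothesis applied to $z_2,\ldots,z_r$, whose generated $\C$-algebra sits inside $\mathcal C$, the sequence $(w_{n_1})_{n_1> r-1}$ has an asymptotic expansion to arbitrary precision relative to $\mathcal E$ with coefficients in $\mathcal C$, and by the remark after Definition \ref{def-reg-val} the same holds for $(w_n)_{n\ge1}$ (it agrees with the former for $n>r-1$). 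It therefore suffices to show that $f_n:=z_1^{n}(\log n)^{k_1}n^{-a_1}w_n$ has such an expansion, for then Proposition \ref{partial sum} applied to $(f_n)$ gives the desired expansion of $u_n=\sum_{n>n_1>0}f_{n_1}$. To see this, fix an integer $A$, write $w_n=\sum_{l\ge0,\ m\le B}b_n^{(l,m)}(\log n)^l n^{-m}+o(n^{-B})$ with each $(b_n^{(l,m)})_{n\ge1}\in\mathcal C$, and multiply through by $z_1^{n}(\log n)^{k_1}n^{-a_1}$: the main part becomes a finite sum of terms $\big(z_1^{n}b_n^{(l,m)}\big)(\log n)^{l+k_1}n^{-(m+a_1)}$, whose coefficient sequences lie in $\mathcal C$ because $\mathcal C$ is an algebra (concretely, each basis element of $\mathcal C$ is sent by multiplication by $(z_1^n)_{n\ge1}$ to another element of $\mathcal S$, hence to a combination of elements of $\mathcal B$), and whose monomials lie in $\mathcal E$ since the exponent is merely shifted by $(k_1,a_1)$; the remainder is $o\big((\log n)^{k_1}n^{-(B+a_1)}\big)=o\big(n^{-(B+a_1-1)}\big)$. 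Taking $B\ge A-a_1+1$ exhibits $f_n$ as a finite $\mathcal E$-expansion over $\mathcal C$ plus $o(n^{-A})$, and since $A$ was arbitrary we are done.

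This argument is largely routine once Proposition \ref{partial sum} is available; the only genuinely delicate point is the bookkeeping in the inductive step — verifying that multiplication by the single monomial $z_1^{n}(\log n)^{k_1}n^{-a_1}$ preserves membership in the relevant class (coefficient sequences in the algebra $\mathcal C$, monomials in the scale $\mathcal E$) and, in particular, absorbing the harmless loss in the error term produced by the extra $(\log n)^{k_1}$ factor by requiring the depth-$(r-1)$ expansion to a correspondingly higher precision.
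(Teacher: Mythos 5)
Your argument is correct and is essentially the proof the paper has in mind: the paper omits it, citing \cite[Theorem 1]{BS2}, whose proof is exactly this induction on the depth $r$ via Proposition~\ref{partial sum}, peeling off the outermost summation variable and checking that multiplication by $z_1^{n}(\log n)^{k_1}n^{-a_1}$ preserves coefficients in $\mathcal C$ and monomials in $\mathcal E$. Your bookkeeping of the error term (absorbing the extra $(\log n)^{k_1}$ by taking the inner expansion to precision $B\ge A-a_1+1$) is the right and necessary care.
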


\begin{proof}
The proof follows exactly as in \cite[Theorem 1]{BS2} and hence omitted.
\end{proof}

Hence from the above theorem, as $n$ tends to $\infty$, we can write the sum 
$$
\sum_{n>n_1>\cdots>n_r>0}\frac{z_1^{n_1}(\log n_1)^{k_1}\cdots z_r^{n_r}(\log n_r)^{k_r}}{n_1^{a_1}\cdots n_r^{a_r}}
=\sum_{l, \hspace{0.5mm} m \ge 0}a_{n}^{(l,m)}(\log n)^l n^{m}+o(1),
$$
where $(a_{n}^{(l,m)})_{n>r}\in \mathcal{C}$ is such that $(a_{n}^{(l,m)})_{n>r}$ is the constant sequence $(0)_{n >r}$
for all but finitely many pairs $(l,m)$.

\begin{defn}
Let $r\geq0$ be an integer. For any $(a_1,\ldots,a_r)\in \Z^r$ and $(k_1,\ldots,k_r)\in \N^r$,
the regularised value of the sequence $(u_n)_{n > r}$, where
$$
u_n:=\sum_{n>n_1>\cdots>n_r>0} \frac{z_1^{n_1}(\log n_1)^{k_1}\cdots z_r^{n_r}(\log n_r)^{k_r}}{n_1^{a_1}\cdots n_r^{a_r}},
$$
is denoted by $\ell_{[k_1,\ldots,k_r]}^{(a_1,\ldots,a_r)}(\mathbf{z})$ and we call it the multiple Stieltjes constant for the multiple polylogarithms
at $(a_1,\ldots,a_r)$ of order $[k_1,\ldots, k_r]$.
\end{defn}

\section{Proof of \thmref{big-dom}}
For $r\geq 1$ an integer and ${\bf z}:=(z_1,\ldots,z_r) \in \C^r$  where $|z_i|\leq1$ for all $1\leq i\leq r$,
we recall that $U_r(\mathbf{z})$ is defined in \eqref{Urz} as follows:
\begin{equation*}
\begin{split}
U_r(\mathbf{z}):=\{ & (s_1,\ldots,s_r)\in\mathbb{C}^r: \Re(s_1+\cdots+s_i)>i \text{ if } 1\leq i < q({\bf z}) \\
&\text{ and } \Re(s_1+\cdots+s_i)>i-1 \text{ if } q({\bf z})\leq i \leq r \} .
\end{split}
\end{equation*}
For an integer $N \ge 1$ and $(a_1,\ldots,a_r) \in U_r(\mathbf{z})$,  we denote the partial sum 
$$
\sum_{N>n_1>\cdots>n_r>0}\frac{z_1^{n_1}\cdots z_r^{n_r}}{n_1^{a_1}\cdots n_r^{a_r}}
$$
by $t_{(z_1,\ldots,z_r)}(a_1,\ldots,a_r)_N$ with the convention that for $N \le r$, we have $t_{(z_1,\ldots,z_r)}(a_1,\ldots,a_r)_N=0$.
To show the series in \eqref{mp} converges, it is therefore enough to show that
the difference $|t_{(z_1,\ldots,z_r)}(a_1,\ldots,a_r)_M - t_{(z_1,\ldots,z_r)}(a_1,\ldots,a_r)_N| \to 0$ as $N \to \infty$
for any $M>N$. We in fact, prove a stronger result.
Note that
$$
t_{(z_1,\ldots,z_r)}(a_1,\ldots,a_r)_M - t_{(z_1,\ldots,z_r)}(a_1,\ldots,a_r)_N=
\sum_{M>n_1>\cdots>n_r>0 \atop n_1 \ge N}\frac{z_1^{n_1}\cdots z_r^{n_r}}{n_1^{a_1}\cdots n_r^{a_r}},
$$
and we denote the series on the right-hand side by $t_{(z_1,\ldots,z_r)}(a_1,\ldots,a_r)_{M,N}$.
\thmref{big-dom} then follows from the following statement.

\begin{prop}\label{prop-Urz}
There is a polydisc $D$ around $(a_1,\ldots,a_r) \in U_r(\mathbf{z})$
and $\epsilon >0$ such that
$$
\|t_{(z_1,\ldots,z_r)}(s_1,\ldots,s_r)_{M,N}\|_{\overline D}=O(N^{-\epsilon}),
$$
as $N \to \infty$. Here $\overline D$ denotes the closure of $D$ in $\C^r$ and $\|t_{(z_1,\ldots,z_r)}(s_1,\ldots,s_r)_{M,N}\|_{\overline D}$
denotes the supremum of $|t_{(z_1,\ldots,z_r)}(s_1,\ldots,s_r)_{M,N}|$ on $\overline D$.
\end{prop}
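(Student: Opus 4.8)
The plan is to prove \propref{prop-Urz} and to deduce \thmref{big-dom} from it. The deduction is routine: once every $(a_1,\ldots,a_r)\in U_r(\mathbf z)$ is known to have a polydisc neighbourhood $D$ on which $\|t_{(z_1,\ldots,z_r)}(s_1,\ldots,s_r)_{M,N}\|_{\overline D}=O(N^{-\epsilon})$ uniformly in $M>N$, the partial sums $t_{(z_1,\ldots,z_r)}(s_1,\ldots,s_r)_N$ form a uniformly Cauchy sequence of holomorphic functions on $\overline D$, hence converge uniformly on $D$ to a holomorphic function; since $U_r(\mathbf z)$ is covered by such polydiscs, the series in \eqref{mp} converges on $U_r(\mathbf z)$ and is holomorphic there. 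So everything reduces to the displayed estimate.

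I would prove the estimate by induction on the depth $r$. Put $q=q(\mathbf z)$; the relations $z_{[1,i]}=1$ for $1\le i<q$ force $z_1=\cdots=z_{q-1}=1$ and $z_q\neq1$ (if $q=r+1$, then $\mathbf z=(1,\ldots,1)$ and $U_r(\mathbf z)=U_r$). Given $(a_1,\ldots,a_r)\in U_r(\mathbf z)$, shrink $D$ so that for a fixed $\delta>0$ the inequalities $\Re(s_1+\cdots+s_i)\ge i+\delta$ ($1\le i<q$) and $\Re(s_1+\cdots+s_i)\ge i-1+\delta$ ($q\le i\le r$) hold on all of $\overline D$. This uniform gap keeps every implied constant below uniform over $\overline D$, and the final $\epsilon$ will depend only on $\delta$, $r$ and $\mathbf z$. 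The base case $r=1$ is immediate: for $z_1\neq1$, Abel summation against the bounded partial sums $\sum_{k\le n}z_1^k$ yields $O(N^{-\Re s_1})=O(N^{-\delta})$, while for $z_1=1$ one has $\Re s_1\ge1+\delta$ and $|t_1(s_1)_{M,N}|\le\sum_{n\ge N}n^{-\Re s_1}=O(N^{-\delta})$.

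The inductive step rests on a translation formula for the partial tails, the analogue for $t_{(z_1,\ldots,z_r)}(s)_{M,N}$ of the identities in \cite[Theorems 6, 7]{BS1} (which were written for the whole sum). One reorganises $t_{(z_1,\ldots,z_r)}(s)_{M,N}$ by bringing to the front the first non-trivially weighted variable $n_q$; since $z_1=\cdots=z_{q-1}=1$, its weight is $z_q=z_{[1,q]}\neq1$, so the partial sums $\sum_{k\le n}z_q^k$ are bounded. This presents $t_{(z_1,\ldots,z_r)}(s)_{M,N}$ in terms of sums over $n_q$ of $z_q^{n_q}n_q^{-s_q}$ times the ``upper block'' partial tail $B_{<X}(n_q):=\sum_{X>n_1>\cdots>n_{q-1}>n_q}n_1^{-s_1}\cdots n_{q-1}^{-s_{q-1}}$ (of size $\ll n_q^{(q-1)-\Re(s_1+\cdots+s_{q-1})}$, with forward difference in $n_q$ equal to $-(n_q+1)^{-s_{q-1}}$ times the analogous $(q{-}2)$-fold tail) times the ``lower block'' $t_{(z_{q+1},\ldots,z_r)}(s_{q+1},\ldots,s_r)_{n_q}$ (whose forward difference in $n_q$ is $z_{q+1}^{n_q}n_q^{-s_{q+1}}t_{(z_{q+2},\ldots,z_r)}(s_{q+2},\ldots,s_r)_{n_q}$). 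Abel-summing the $n_q$-variable against $\sum_{k\le n}z_q^k$ and expanding the forward differences by the product rule (and $n^{-s}-(n+1)^{-s}=O(n^{-\Re s-1})$) reduces matters to bounding finitely many sums $\sum_{n\ge N}(\cdots)$, each of which is controlled by the crude estimate $|t_{(z_j,\ldots,z_r)}(s_j,\ldots,s_r)_n|\ll n^{\rho_j+o(1)}$ — with $\rho_j=\max\{0,\max_{1\le t\le r-j+1}(t-\Re(s_j+\cdots+s_{j+t-1}))\}$, obtained by dominating by a partial multiple-harmonic sum — together with the companion bounds for the upper-block tails and for the differences $B_{<M}-B_{<N}$. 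The defining inequalities of $U_r(\mathbf z)$ turn out to be exactly sharp enough for each such sum to converge with a power saving $N^{-\epsilon}$; when $q=1$ this already finishes the step, with no appeal to the inductive hypothesis. When $q\ge2$ there is in addition one term not absorbed this way, the ``main'' term of the translation formula: an explicit rational factor, non-vanishing on $\overline D$, times the partial tail $t_{(z_q,\ldots,z_r)}(s_1+\cdots+s_q-(q-1),s_{q+1},\ldots,s_r)$. To this one applies the inductive hypothesis — the depth $r-q+1$ is $\le r-1$, and by the gap inequalities its argument tuple lies in $U_{r-q+1}((z_q,\ldots,z_r))$, since for $0\le k\le r-q$ the $(k{+}1)$-th defining inequality of the latter set reads $\Re(s_1+\cdots+s_{q+k})>q+k-1$, which $U_r(\mathbf z)$ supplies because $q+k\ge q(\mathbf z)$.

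Assembling the pieces and summing the uniformly geometric-type bounds $\sum_{n\ge N}n^{-1-\delta}=O(N^{-\delta})$ gives $\|t_{(z_1,\ldots,z_r)}(s)_{M,N}\|_{\overline D}=O(N^{-\epsilon})$ with $\epsilon=\epsilon(\delta,r,\mathbf z)>0$. I expect the main obstacle to be exactly the inductive step for $q\ge2$: writing down the partial-tail translation formula with enough control on the $N$- and $M$-dependence of its error terms, and the exponent bookkeeping that shows every term produced is either handled by the inductive hypothesis or is $O(N^{-\epsilon})$ — in particular the boundary terms of the summation by parts (where the lower-block tail is evaluated outside any domain of convergence) and the difference $B_{<M}(n_q)-B_{<N}(n_q)$ of the two upper-block tails, both of which have to be controlled through the crude partial multiple-harmonic bounds together with the gap inequalities, uniformly over $\overline D$ and in $M>N$.
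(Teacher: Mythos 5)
Your route is genuinely different from the paper's. The paper's mechanism (\lemref{lem-trans-tail}) is the exact identity obtained by expanding $(n_1-1)^{1-s_1}-n_1^{1-s_1}$ as the binomial series $\sum_{k\ge0}\frac{(s_1-1)_{k+1}}{(k+1)!}n_1^{-s_1-k}$ and telescoping in $n_1$: it always merges the first two variables into one of weight $z_{[1,2]}$ and exponent $s_1+s_2-1$, reducing the depth by exactly one per step regardless of whether $z_1=1$ (the shift $\delta_1$ records whether a power is gained), and the induction closes using \lemref{tail-normal} for the $k\ge k_0$ part of the series and a separate induction (\corref{cor-lem}) for the boundary terms carrying $z_1^M(M-1)^{1-s_1-\delta_1}$ and $z_1^N(N-1)^{1-s_1-\delta_1}$. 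You instead jump straight to the first oscillating index $q=q(\mathbf z)$, Abel-sum against the bounded partial sums of $z_q^n$, and propose to control everything else by crude absolute-value bounds. Your domain bookkeeping is correct: the shifted tuple does land in $U_{r-q+1}((z_q,\ldots,z_r))$, and in the low-depth cases I checked, the exponents $\rho_j$ produced by the product rule are matched exactly by the defining inequalities of $U_r(\mathbf z)$. A genuine advantage of your organisation, if it can be completed, is that the single Abel summation in $n_q$ is performed on the full product (upper block times lower block), so the $M$- and $N$-dependent pieces of the upper block inherit the cancellation automatically; the paper instead must run \corref{cor-lem} as a second induction precisely because its boundary terms are split off before any oscillation is used.

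The gap is that the step you yourself flag as the main obstacle is the proof. The translation formula you invoke is never derived: for $q\ge2$ the upper block is a genuine $(q-1)$-fold sum depending simultaneously on $M$, $N$ and $n_q$, and your claim that after the product-rule expansion exactly one term survives the crude bounds — a non-vanishing rational factor times $t_{(z_q,\ldots,z_r)}(s_1+\cdots+s_q-(q-1),s_{q+1},\ldots,s_r)_{?,?}$ — requires extracting the leading asymptotics of that block with error terms uniform in all three parameters (and with specified cutoffs for the reduced-depth tail, which you leave blank). That extraction is essentially \lemref{lem-trans-tail} iterated $q-1$ times; it is not obtained from the crude multiple-harmonic estimates, which by construction discard all oscillation. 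Likewise, the assertion that ``the defining inequalities of $U_r(\mathbf z)$ turn out to be exactly sharp enough for each such sum to converge with a power saving'' is the conclusion one must verify term by term over the whole tree of forward differences (including the $n_q<N$ versus $n_q\ge N$ split and the Abel-summation endpoint terms, where the lower block is evaluated outside its domain of convergence), not a step one may assert. As written, the proposal is a plausible plan whose decisive computations are deferred; the paper's Lemma~\ref{lem-trans-tail}, Lemma~\ref{tail-normal} and Corollary~\ref{cor-lem} are exactly the pieces that fill this hole in its own (different) organisation of the argument.
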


Here by a polydisc around a point $(b_1,\ldots,b_r) \in \C^r$, we mean an open subset of $\C^r$
which is obtained by taking the Cartesian products of open discs around each $b_i$ in $\C$.
We now recall the definition of normal convergence of a series of functions.

\begin{defn}\label{def-nc-1}
Let $X$ be a set and $(f_i)_{i \in I}$ be a 
family of complex valued functions
defined on $X$. We say that the family of 
functions $(f_i)_{i \in I}$
is normally summable on $X$ or the series $\sum_{i \in I} f_i$ 
converges normally on $X$ if 
$$
\|f_i\|_X := \sup_{x \in X} |f(x)| < \infty ,
~\text{      for all  }i \in I,
$$ 
and the family of real numbers 
$(\| f_i \|_X)_{i \in I}$ is summable. 
\end{defn}

We need the following lemma from \cite[Proposition 3]{BS1}.

\begin{lem}\label{normal}
The family of functions
$$
\left(\frac{(s_1-1)_{k+1}}{(k+1)!}\frac{z_1^{n_1} \cdots z_r^{n_r}}{n_1^{s_1+k}\cdots n_r^{s_r}}\right)_{n_1>\cdots >n_r>0, \hspace{0.5mm} k\geq 0}
$$
is normally summable on any compact subset of $U_r$. Here $(s_1-1)_{k+1}$ denotes the product
$(s_1-1)(s_1)\cdots(s_1+k-1)$.
\end{lem}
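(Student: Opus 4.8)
The plan is to verify the defining condition of normal summability (Definition~\ref{def-nc-1}) directly on a fixed compact set $K \subset U_r$: writing $f_{\mathbf n, k}$ for the member of the family indexed by $\mathbf n = (n_1, \ldots, n_r)$ with $n_1 > \cdots > n_r > 0$ and by $k \ge 0$, I must show that the family of sup-norms $\|f_{\mathbf n, k}\|_K$ is summable. Since $|z_i| \le 1$, each character factor contributes at most $1$ in modulus and $k$ is real, so the first step is to record the pointwise bound
$$
\|f_{\mathbf n, k}\|_{K} \le C_k\, n_1^{-k}\, M_{\mathbf n}, \qquad C_k := \sup_{s \in K}\left|\frac{(s_1-1)_{k+1}}{(k+1)!}\right|, \quad M_{\mathbf n} := \sup_{s\in K}\frac{1}{n_1^{\Re s_1}\cdots n_r^{\Re s_r}},
$$
obtained from $\sup$ of a product $\le$ product of $\sup$'s together with $n_1^{-\Re(s_1+k)} = n_1^{-\Re s_1}n_1^{-k}$. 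It then suffices to prove that $\sum_{k \ge 0}\sum_{\mathbf n} C_k\, n_1^{-k}\, M_{\mathbf n}$ is finite, which rests on two separate inputs: (i) a polynomial-in-$k$ bound on $C_k$, uniform over $K$, and (ii) the finiteness of $\sum_{\mathbf n} M_{\mathbf n}$.

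For (i), rather than invoke Stirling asymptotics for the Gamma ratio, I would use the elementary telescoping estimate
$$
\left|\frac{(s_1-1)_{k+1}}{(k+1)!}\right| = |s_1 - 1|\prod_{j=1}^{k}\left|1 + \frac{s_1 - 2}{j+1}\right| \le |s_1-1|\exp\Big(|s_1-2|\sum_{j=1}^{k}\frac{1}{j+1}\Big),
$$
using $1+t\le e^{t}$. Since $\sum_{j=1}^k \frac1{j+1} \le \log(k+1) + O(1)$, this gives $C_k = O(k^{B})$ with $B := \sup_{s \in K}|s_1 - 2| < \infty$; that is, $C_k$ grows at most polynomially, uniformly on $K$ (the quantities $|s_1-1|$ and $|s_1-2|$ being bounded there). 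For (ii), the sum $\sum_{\mathbf n} M_{\mathbf n}$ is precisely the normal sum of the depth-$r$ multiple zeta series, i.e. the case $z_1 = \cdots = z_r = 1$, on $K$; it converges because $K$ is a compact subset of $U_r$, which is the normal convergence recalled just after \eqref{mp} (Proposition~2 of \cite{BS1}).

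It remains to combine (i) and (ii). Here the crucial structural point is that the constraint $n_1 > \cdots > n_r > 0$ forces $n_1 \ge 2$ (automatically so once $r \ge 2$), hence $n_1^{-k} \le 2^{-k}$ for every term of the family. Consequently
$$
\sum_{k\ge0}\sum_{\mathbf n} C_k\, n_1^{-k}\, M_{\mathbf n} \le \Big(\sum_{k\ge 0}C_k\, 2^{-k}\Big)\Big(\sum_{\mathbf n} M_{\mathbf n}\Big),
$$
and the $k$-series converges because the geometric factor $2^{-k}$ dominates the polynomial growth of $C_k$ from (i), while the $\mathbf n$-sum is finite by (ii). This establishes normal summability on $K$, and since $K \subset U_r$ was arbitrary, on every compact subset of $U_r$.

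The step I expect to be most delicate is exactly this interplay between the coefficient growth and the geometric decay: one must ensure the factor $n_1^{-k}$ is genuinely present with base at least $2$. This is transparent from the generating identity $\sum_{k \ge 0}\frac{(s_1-1)_{k+1}}{(k+1)!}x^{k+1} = (1-x)^{1-s_1} - 1$ taken at $x = 1/n_1$, which underlies the translation formula producing this family and which converges only for $n_1 \ge 2$; the degenerate value $n_1 = 1$ (possible only when $r=1$) does not occur in that formula and is excluded accordingly. The only remaining care is to keep every supremum uniform over the compact set $K$, which is immediate since the relevant moduli and the exponents $\Re s_i$ are all bounded on $K$.
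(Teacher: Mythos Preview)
Your argument is correct and self-contained. Note, however, that the paper does not actually prove this lemma: it simply quotes it as \cite[Proposition~3]{BS1}, so there is no in-paper proof to compare against. Your route---bounding the Pochhammer ratio by $O(k^{B})$ via the elementary product inequality $|1+w|\le e^{|w|}$, invoking the known normal convergence of the multiple zeta series on compacta of $U_r$ for the $\mathbf n$-sum, and then playing the polynomial growth of $C_k$ against the geometric decay $n_1^{-k}\le 2^{-k}$---is the natural one and almost certainly what \cite{BS1} does as well.

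Your handling of the edge case $n_1=1$ (possible only when $r=1$) is exactly the right observation. As literally stated the lemma would fail there: for instance at $s_1=2$ one has $(s_1-1)_{k+1}/(k+1)!=1$ for every $k$, so the terms with $n_1=1$ do not form a summable family. But the family arises from the binomial expansion $(n_1-1)^{1-s_1}-n_1^{1-s_1}=\sum_{k\ge 0}\frac{(s_1-1)_{k+1}}{(k+1)!}n_1^{-s_1-k}$, which is only written for $n_1\ge 2$, and every application of the lemma in the paper (in \corref{cor-lem} and via \lemref{tail-normal}) has $n_1\ge N\ge 2$. Reading the index set with $n_1\ge 2$ is therefore the intended interpretation, and under it your proof goes through cleanly.
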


We also need the following lemma.

\begin{lem}\label{tail-normal}
Let  $(b_1,\ldots, b_r)\in \C^r$. Let $k_0\in \N$ be the least non-negative integer
such that $(b_1+k_0,b_2,\ldots, b_r)\in U_r$. Then there exists a polydisc $D$ around $(b_1,\ldots, b_r)$
and $\epsilon>0$ such that the family of functions 
$$
\left(\frac{(s_1-1)_{k+1}}{(k+1)!}\frac{z_1^{n_1} \cdots z_r^{n_r}}{n_1^{s_1+k}\cdots n_r^{s_r}}\right)_{\substack {n_1>\cdots > n_r>0 \\ n_1\geq N\geq2;\hspace{0.5mm} k\geq k_0}}  
$$
is normally summable on the closure $\overline{D}$ of $D$ in $\C^r$ and its sum is $O(N^{-\epsilon})$ as $N$ tends to $\infty$.
\end{lem}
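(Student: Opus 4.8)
The plan is to reduce \lemref{tail-normal} to \lemref{normal} by an Abel-summation / partial-summation argument in the variable $n_1$, exploiting the fact that the $z_i$ have modulus $\le 1$ so that certain one-dimensional tails in $n_1$ are bounded uniformly. First I would fix $(b_1,\ldots,b_r)$ and the least $k_0$ with $(b_1+k_0,b_2,\ldots,b_r)\in U_r$. Since $U_r$ is open, there is a small polydisc $D$ around $(b_1,\ldots,b_r)$, say of polyradius $\delta$, such that $(s_1+k_0,s_2,\ldots,s_r)$ stays in a fixed compact subset $K\subseteq U_r$ for all $(s_1,\ldots,s_r)\in\overline D$; shrinking $\delta$ we may also assume $\Re(s_1+\cdots+s_i)>i-\eta$ on $\overline D$ for the relevant indices, which will produce the power saving. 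By \lemref{normal} the family indexed by $k\ge k_0$ (with $n_1>\cdots>n_r>0$ ranging over all tuples) is normally summable on $K$ after the shift; the content of the present lemma is to insert the extra constraint $n_1\ge N$ and to extract the decay $O(N^{-\epsilon})$.

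The key step is the estimate for fixed $k\ge k_0$ of the inner sum over $n_2>\cdots>n_r$ with $n_1$ fixed, followed by summation over $n_1\ge N$. For the inner block $\sum_{n_1>n_2>\cdots>n_r>0}\frac{z_2^{n_2}\cdots z_r^{n_r}}{n_2^{s_2}\cdots n_r^{s_r}}$ one uses the normal-convergence bound from \lemref{normal} (applied to depth $r-1$ after absorbing the $(s_1-1)_{k+1}/(k+1)!$ factor, which grows only polynomially in $k$ and is killed by the $n_1^{-(s_1+k)}$ factor once $n_1\ge2$): this inner sum is $O((\log n_1)^{c})$ or at worst $O(n_1^{\epsilon_0})$ for any $\epsilon_0>0$, uniformly on $\overline D$, where the implied constants depend polynomially on $k$. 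Then $\sum_{n_1\ge N} |z_1^{n_1}| \,n_1^{-\Re(s_1)-k}\cdot O(n_1^{\epsilon_0}) = O(N^{-\epsilon})$ provided $\Re(s_1)+k>1+\epsilon_0$, which holds for $k\ge k_0$ on $\overline D$ after the shrinking above; summing the geometric-type series in $k$ (the factor $n_1^{-k}\le 2^{-k}$ for $n_1\ge2$ dominates the polynomial growth of $(s_1-1)_{k+1}/(k+1)!$) gives a bound that is still $O(N^{-\epsilon})$ and simultaneously establishes normal summability of the full family on $\overline D$.

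The main obstacle I anticipate is handling the case $q(\mathbf z)>1$ uniformly, i.e. when $z_1=\cdots=z_{q-1}=1$ so that $(b_1,\ldots,b_r)$ may have $\Re(b_1)\le1$ and the relevant one-dimensional tail $\sum_{n_1\ge N} n_1^{-(s_1+k)}$ is not on its own small without using $k\ge k_0$. The resolution is precisely the definition of $k_0$: the shift by $k_0$ forces $(b_1+k_0,b_2,\ldots,b_r)\in U_r$, hence $\Re(b_1)+k_0>1$, and the remaining partial-sum constraints $\Re(b_1+\cdots+b_i)>i$ (for $i<q(\mathbf z)$) or $>i-1$ (for $i\ge q(\mathbf z)$) guarantee that the nested inner sums converge at worst with a $\log$ loss — and here, when $z_j\ne1$, the boundedness of $\sum z_j^{n_j}$ lets us trade one unit of $\Re(s_1+\cdots+s_j)$, exactly as in the proof of \thmref{big-dom}. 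So the bookkeeping is to show that after the single shift in the first coordinate, all deeper sums are uniformly $O(n_1^{\epsilon_0})$ on a small enough polydisc; once that is in place, the outer sum over $n_1\ge N$ and over $k\ge k_0$ collapses to $O(N^{-\epsilon})$ by dominated geometric estimates.
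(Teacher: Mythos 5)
Your overall skeleton --- shift the first coordinate by $k_0$ so that the point lands in $U_r$, beat the polynomially growing Pochhammer factor with $n_1^{-(k-k_0)}\le 2^{-(k-k_0)}$ to sum over $k\ge k_0$, and extract the $N^{-\epsilon}$ decay from the tail in $n_1$ --- is the right one; it is essentially the mechanism behind the result the paper actually invokes, since the paper's proof is a single line (every term is dominated in modulus by the corresponding term with all $z_i=1$ because $|z_i|\le1$, and that case is \cite[Lemma~2]{BS2}). But two of your steps are wrong as written. First, the claim that the inner block over $n_2>\cdots>n_r$ is $O(n_1^{\epsilon_0})$ for every $\epsilon_0>0$ fails in general: the lemma imposes no condition on $(b_2,\ldots,b_r)$ individually, only on the shifted full tuple, so for instance with $r=2$ and $b_2=-5$ the inner sum of absolute values grows like $n_1^{6}$. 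The correct bookkeeping is that the inner block is $O\bigl(n_1^{\beta+\epsilon_0}\bigr)$ with $\beta=\max_{2\le i\le r}\bigl(0,(i-1)-\Re(b_2+\cdots+b_i)\bigr)$, and the hypothesis $(b_1+k_0,b_2,\ldots,b_r)\in U_r$ is precisely the statement $\Re(b_1)+k_0>1+\beta$, which is what makes the outer sum over $n_1\ge N$ converge with a power saving.

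Second, and more fundamentally, your ``resolution'' of the case $q(\mathbf z)>1$ --- trading a unit of $\Re(s_1+\cdots+s_j)$ via the boundedness of the partial sums of $z_j^{n_j}$, ``as in the proof of \thmref{big-dom}'' --- cannot be used here. The conclusion to be proved is \emph{normal} summability, i.e.\ finiteness of the sum of the sup norms $\|f_i\|_{\overline D}$ over the family, and these norms are blind to the phases of the $z_j$: if the family failed to be normally summable with every $z_j$ replaced by $1$, it would fail for any $z_j$ on the unit circle as well. Cancellation is the tool for the conditional-convergence statements (\thmref{big-dom}, \propref{prop-Urz}); it is structurally unavailable for this lemma. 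It is also unnecessary: the obstacle you describe does not arise, because the definition of $k_0$ already places the shifted point in the domain of absolute convergence $U_r$ regardless of $q(\mathbf z)$, and the points $(b_1,\ldots,b_r)$ in the lemma are not assumed to lie in $U_r(\mathbf z)$ at all. Once the cancellation step is deleted and the inner-block exponent corrected, your argument collapses to the absolute-value estimate that the paper gets by citing \cite[Lemma~2]{BS2}.
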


\begin{proof} As $|z_i|\le1$ for all $1\leq i\leq r$, the proof follows from \cite[Lemma 2]{BS2}.
\end{proof}

Our next lemma is key to the proof of \propref{prop-Urz}. It can be viewed as an 
extension of \cite[Theorems 6, 7]{BS1} for the partial tails of the series in \eqref{mp}.

\begin{lem}\label{lem-trans-tail}
Let $r\geq 1$ be an integer and $(s_1,\ldots,s_r)\in \C^r$. Then  for $M>N\geq 2$, the following equality holds:
For $r=1$, we have
\begin{align}\label{trans-tail-1}
(z_1-1)t_{z_1}(s_1-1)_{M,N}+ \frac{z_1^N}{(N-1)^{s_1-1}} - \frac{z_1^M}{(M-1)^{s_1-1}} 
=\sum_{k\geq0}\frac{(s_1-1)_{k+1}}{(k+1)!} t_{z_1}(s_1+k)_{M,N},
\end{align}
and for $r>1$, we have
\begin{align}\label{trans-tail-r}
\begin{split}
&z_1 t_{(z_{[1,2]},z_3, \ldots, z_r)}(s_1+s_2-1,s_3,\ldots,s_r)_{M-1,N} +
(z_1-1)t_{(z_1, \ldots, z_r)}(s_1-1,s_2,\ldots,s_r)_{M,N}\\
&+\frac{z_1^N}{(N-1)^{s_1-1}}t_{(z_2, \ldots, z_r)}(s_2,\ldots,s_r)_{N}
-\frac{z_1^M}{(M-1)^{s_1-1}}t_{(z_2, \ldots, z_r)}(s_2,\ldots,s_r)_{M-1}\\
&=\sum_{k\geq0}\frac{(s_1-1)_{k+1}}{(k+1)!}t_{(z_1, \ldots, z_r)}(s_1+k,s_2,\ldots,s_r)_{M,N}.
\end{split}
\end{align}
\end{lem}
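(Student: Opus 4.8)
The plan is to derive both identities by a single Abel-summation / telescoping argument applied to the outermost summation variable $n_1$, mimicking the derivation of the translation formulas in \cite[Theorems 6, 7]{BS1} but keeping careful track of the endpoints so that the formula holds for the truncated tails $t_{\cdots}(\cdots)_{M,N}$ rather than for the full series. First I would record the elementary ``translation'' relation for a single power: for an integer $n\geq 2$ and any $s_1\in\C$,
\begin{equation*}
\frac{1}{n^{s_1-1}}-\frac{1}{(n-1)^{s_1-1}}
=-\sum_{k\geq 0}\frac{(s_1-1)_{k+1}}{(k+1)!}\,\frac{1}{n^{s_1+k}},
\end{equation*}
which is just the binomial/Taylor expansion of $(1-1/n)^{-(s_1-1)}$ and converges since $n\geq 2$; the coefficient $(s_1-1)_{k+1}=(s_1-1)(s_1)\cdots(s_1+k-1)$ is exactly the one appearing in the statement. (Convergence and the legitimacy of interchanging this expansion with the finite outer sum will be controlled by Lemma~\ref{normal} and Lemma~\ref{tail-normal}, which give normal summability of the relevant double family.)

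Next, for $r=1$, I would write $z_1^{n_1}=z_1\cdot z_1^{n_1-1}$ and perform an index shift $n_1\mapsto n_1-1$ on part of the sum, so that
\begin{equation*}
(z_1-1)\,t_{z_1}(s_1-1)_{M,N}
=\sum_{\substack{M>n_1\geq N}}\frac{z_1^{n_1}}{n_1^{s_1-1}}
-\sum_{\substack{M>n_1\geq N}}\frac{z_1^{n_1}}{(n_1-1)^{s_1-1}}\cdot\frac{(n_1-1)^{s_1-1}}{n_1^{s_1-1}}\cdot(\text{reindex}),
\end{equation*}
the point being that $z_1\,t_{z_1}(s_1-1)_{M,N}$ after the shift $n_1\to n_1-1$ becomes $\sum z_1^{n_1}/(n_1-1)^{s_1-1}$ over a range differing from the original only in its two endpoints, which produces exactly the boundary terms $z_1^N/(N-1)^{s_1-1}$ and $z_1^M/(M-1)^{s_1-1}$. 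Combining the two sums gives $\sum z_1^{n_1}\big(n_1^{-(s_1-1)}-(n_1-1)^{-(s_1-1)}\big)$ over the range $M>n_1\geq N$, and inserting the single-power identity above yields the right-hand side of \eqref{trans-tail-1}. For $r>1$ the same manipulation is applied with the inner sum $\sum_{n_1>n_2>\cdots>n_r>0}$ carried along as a factor depending on $n_1$; the index shift $n_1\mapsto n_1-1$ on the term carrying $z_1\,t_{(z_{[1,2]},z_3,\dots,z_r)}(\dots)$ now interacts with the constraint $n_1>n_2$, which is precisely why the inner sum detaches at the two endpoints $n_1=N$ and $n_1=M-1$ and produces the two product boundary terms $\frac{z_1^N}{(N-1)^{s_1-1}}t_{(z_2,\dots,z_r)}(s_2,\dots,s_r)_N$ and $\frac{z_1^M}{(M-1)^{s_1-1}}t_{(z_2,\dots,z_r)}(s_2,\dots,s_r)_{M-1}$; merging $z_{[1,2]}=z_1 z_2$ accounts for the collapsed first two arguments.

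The main obstacle, and the step to write most carefully, is the endpoint bookkeeping in the reindexing: one must check that shifting $n_1\mapsto n_1-1$ in $z_1\,t_{\cdots}(s_1-1)_{M,N}$ (resp.\ in the depth-reduced term for $r>1$) changes the summation range from $\{M>n_1\geq N\}$ to $\{M-1>n_1\geq N-1\}$, so that the symmetric difference of the two ranges is exactly $\{n_1=N-1\}\cup\{n_1=M-1\}$, yielding the stated boundary terms with the stated sign pattern, and no others. The interchange of the infinite $k$-sum with the finite $n_1$-sum (and the inner multiple sum) is then justified termwise by Lemma~\ref{normal} applied on a suitable compact set, since each term $\frac{1}{n_1^{s_1+k}}$ with $n_1\geq 2$ contributes a geometrically decaying factor $2^{-k}$ up to polynomial corrections. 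Once these two points are pinned down, both \eqref{trans-tail-1} and \eqref{trans-tail-r} follow by straightforward algebra, and the identities are purely formal in $(s_1,\dots,s_r)$, valid for all of $\C^r$ as claimed.
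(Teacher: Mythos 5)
Your proposal is correct and follows essentially the same route as the paper: both start from the binomial expansion of $(n_1-1)^{1-s_1}-n_1^{1-s_1}$, multiply through by the remaining factors, and recover the left-hand side by telescoping in $n_1$ (the paper organizes the $r>1$ bookkeeping by splitting into $n_2\geq N$ and $n_2<N$ and telescoping the inner $n_1$-sum, whereas you shift $n_1\mapsto n_1-1$ globally and track the symmetric difference of ranges, which amounts to the same computation; the only point you should spell out is that the diagonal terms $n_1=n_2+1$ with $n_2=N-1$ must be reassigned from the depth-reduced term to the $\frac{z_1^N}{(N-1)^{s_1-1}}t_{(z_2,\ldots,z_r)}(\cdots)_N$ boundary term so that the depth-reduced sum starts at $n_2\geq N$ as stated). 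The interchange of the $k$-sum with the finite $n_1$-sum needs only the pointwise convergence of $\sum_k\frac{(s_1-1)_{k+1}}{(k+1)!}n_1^{-k}$ for $n_1\geq 2$, as you note.
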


\begin{proof}
We will start with the following series expansion for any complex number $s_1$ and integer $n_1\geq2$:
$$
(n_1-1)^{1-s_1}-n_1^{1-s_1}=\sum_{k \geq 0}\frac{(s_1-1)_{k+1}}{(k+1)!}n_1^{-s_1-k}.
$$
Multiplying both sides by $\frac{z_1^{n_1}z_2^{n_2}\cdots z_r^{n_r}}{n_2^{s_2}\cdots n_r^{s_r}}$
and summing for $n_1>\cdots>n_r>0$ with $M>n_1\geq N \ge 2$, we get
\begin{align}\label{3}
\begin{split}
& \sum_{\substack{ M>n_1>\cdots>n_r>0\\ n_1\geq N}}\left( \frac{z_1^{n_1}}{(n_1-1)^{s_1-1}} -\frac{z_1^{n_1}}{n_1^{s_1-1}}\right)\frac{z_2^{n_2}\cdots z_r^{n_r}}{n_2^{s_2}\cdots n_r^{s_r}}\\
&= \sum_{\substack{ M>n_1>\cdots>n_r>0\\ n_1\geq N}}\sum_{k \geq 0}\frac{(s_1-1)_{k+1}}{(k+1)!}\frac{z_1^{n_1}\cdots z_r^{n_r}}{n_1^{s_1+k}\cdots n_r^{s_r}}.
\end{split}
 \end{align}
Note that $\sum_{k\geq0}\frac{(s_1-1)_{k+1}}{(k+1)! \hspace{0.5mm} n_1^k}$ converges (using the ratio test).
Hence interchanging the summations we get that the right-hand side of \eqref{3} is same as
$$
\sum_{k\geq0}\frac{(s_1-1)_{k+1}}{(k+1)!}t_{(z_1, \ldots, z_r)}(s_1+k,s_2,\ldots,s_r)_{M,N}.
$$
When $r=1$, for the left-hand side of \eqref{3} we note that
\begin{align*}
&\sum_{M>n_1\geq N}\left( \frac{z_1^{n_1}}{\left(n_1-1\right)^{s_1-1}}-\frac{z_1^{n_1}}{n_1^{s_1-1}} \right)\\
                    &= \frac{z_1^{N}}{(N-1)^{s_1-1}} - \frac{z_1^{N}}{N^{s_1-1}} + \frac{z_1^{N+1}}{N^{s_1-1}} - \frac{z_1^{N+1}}{(N+1)^{s_1-1}} + \cdots
                    +\frac{z_1^{M-1}}{(M-2)^{s_1-1}} - \frac{z_1^{M-1}}{(M-1)^{s_1-1}}.
\end{align*}
We add and subtract $\frac{z_1^M}{(M-1)^{s_1-1}}$ on the right-hand side to get                     
\begin{align*}                  
\sum_{M>n_1\geq N}\left( \frac{z_1^{n_1}}{\left(n_1-1\right)^{s_1-1}}-\frac{z_1^{n_1}}{n_1^{s_1-1}} \right)=\frac{z_1^{N}}{(N-1)^{s_1-1}} + (z_1-1)\sum_{M>n_1\geq N}\frac{z_1^{n_1}}{ n_1^{s_1-1}}-\frac{z_1^{M}}{(M-1)^{s_1-1}},              
\end{align*}
which is the left-hand side of \eqref{trans-tail-1}.
When $r>1$, for the left-hand side of equation \eqref{3} we write
\begin{align*}
 &\sum_{\substack{ M>n_1>\cdots>n_r>0\\ n_1\geq N}}\left( \frac{z_1^{n_1}}{(n_1-1)^{s_1-1}} -\frac{z_1^{n_1}}{n_1^{s_1-1}}\right)\frac{z_2^{n_2}\cdots z_r^{n_r}}{n_2^{s_2}\cdots n_r^{s_r}}\\
                    &=\sum_{\substack{ M-1>n_2>\cdots>n_r>0\\ n_2\geq N}}\sum_{n_1=n_2+1}^{M-1}\left( \frac{z_1^{n_1}}{(n_1-1)^{s_1-1}} -\frac{z_1^{n_1}}{n_1^{s_1-1}}\right)\frac{z_2^{n_2}\cdots z_r^{n_r}}{n_2^{s_2}\cdots n_r^{s_r}}\\
                    & \ \ +\sum_{\substack{ M-1>n_2>\cdots>n_r>0\\ n_2 < N}}\sum_{n_1=N}^{M-1}\left( \frac{z_1^{n_1}}{(n_1-1)^{s_1-1}} -\frac{z_1^{n_1}}{n_1^{s_1-1}}\right)\frac{z_2^{n_2}\cdots z_r^{n_r}}{n_2^{s_2}\cdots n_r^{s_r}}.
\end{align*}
Computing the first sum in the right-hand side of the above expression we get
\begin{align}\label{sum1}
\begin{split}
                  &\sum_{\substack{ M-1>n_2>\cdots>n_r>0\\ n_2\geq N}}\left(\frac{z_1^{n_2+1}}{n_2^{s_1-1}} + (z_1-1)\sum_{M>n_1\geq n_2+1}\frac{z_1^{n_1}}{ n_1^{s_1-1}}-\frac{z_1^{M}}{(M-1)^{s_1-1}}\right)\frac{z_2^{n_2}\cdots z_r^{n_r}}{n_2^{s_2}\cdots n_r^{s_r}}\\
                 &=z_1\sum_{\substack{ M-1>n_2>\cdots>n_r>0\\ n_2\geq N}}\frac{z_{[1,2]}^{n_2}z_3^{n_3}\cdots z_r^{n_r}}{n_2^{s_1+s_2-1}n_3^{s_3}\cdots n_r^{s_r}}\\
                 & \ \ +(z_1-1)\sum_{\substack{ M-1>n_2>\cdots>n_r>0\\ n_2\geq N}}\left(\sum_{M>n_1\geq n_2+1}\frac{z_1^{n_1}}{ n_1^{s_1-1}}\right)\frac{z_2^{n_2}\cdots z_r^{n_r}}{n_2^{s_2}\cdots n_r^{s_r}}\\
               & \ \ -\frac{z_{1}^{M}}{(M-1)^{s_1-1}}\sum_{\substack{ M-1>n_2>\cdots>n_r>0\\ n_2\geq N}}\frac{z_2^{n_2}\cdots z_r^{n_r}}{n_2^{s_2}\cdots n_r^{s_r}}.
 \end{split}
\end{align}
Similarly, the same computation can be done for the second sum, and we get
\begin{align}\label{sum2}
\begin{split}
                  &\sum_{\substack{ M-1>n_2>\cdots>n_r>0\\ n_2 < N}}\left(\frac{z_1^{N}}{(N-1)^{s_1-1}} + (z_1-1)\sum_{M>n_1\geq N}\frac{z_1^{n_1}}{ n_1^{s_1-1}}-\frac{z_1^{M}}{(M-1)^{s_1-1}}\right)\frac{z_2^{n_2}\cdots z_r^{n_r}}{n_2^{s_2}\cdots n_r^{s_r}}\\
                 &=\frac{z_{1}^{N}}{(N-1)^{s_1-1}}\sum_{\substack{ M-1>n_2>\cdots>n_r>0\\ n_2 < N}}\frac{ z_2^{n_2}\cdots z_r^{n_r}}{n_2^{s_2}\cdots n_r^{s_r}}\\
                 &\ \ +(z_1-1)\sum_{\substack{ M-1>n_2>\cdots>n_r>0\\ n_2 < N}}\left(\sum_{M>n_1\geq N}\frac{z_1^{n_1}}{ n_1^{s_1-1}}\right)\frac{z_2^{n_2}\cdots z_r^{n_r}}{n_2^{s_2}\cdots n_r^{s_r}}\\
               &\ \ -\frac{z_{1}^{M}}{(M-1)^{s_1-1}}\sum_{\substack{ M-1>n_2>\cdots>n_r>0\\ n_2 < N}}\frac{z_2^{n_2}\cdots z_r^{n_r}}{n_2^{s_2}\cdots n_r^{s_r}}.
 \end{split}
\end{align}
Combining the middle terms on the right-hand side of \eqref{sum1} and \eqref{sum2}, we get 
 \begin{align*}
&(z_1-1)\sum_{\substack{ M-1>n_2>\cdots>n_r>0\\ n_2\geq N}}\left(\sum_{M>n_1\geq n_2+1}\frac{z_1^{n_1}}{ n_1^{s_1-1}}\right)\frac{z_2^{n_2}\cdots z_r^{n_r}}{n_2^{s_2}\cdots n_r^{s_r}}\\
& +(z_1-1)\sum_{\substack{ M-1>n_2>\cdots>n_r>0\\ n_2 < N}}\left(\sum_{M>n_1\geq N}\frac{z_1^{n_1}}{ n_1^{s_1-1}}\right)\frac{z_2^{n_2}\cdots z_r^{n_r}}{n_2^{s_2}\cdots n_r^{s_r}}\\
&=(z_1-1)\sum_{\substack{ M>n_1>\cdots>n_r>0\\ n_1\geq N}}\frac{z_1^{n_1}z_2^{n_2}\cdots z_r^{n_r}}{n_1^{s_1-1}n_2^{s_2}\cdots n_r^{s_r}}.
 \end{align*}
Similarly, combining the last terms on the right-hand side of  \eqref{sum1} and \eqref{sum2}, we get
\begin{align*}
&\frac{z_{1}^{M}}{(M-1)^{s_1-1}}\sum_{\substack{ M-1>n_2>\cdots>n_r>0\\ n_2\geq N}}\frac{z_2^{n_2}\cdots z_r^{n_r}}{n_2^{s_2}\cdots n_r^{s_r}}+\frac{z_{1}^{M}}{(M-1)^{s_1-1}}\sum_{\substack{ M-1>n_2>\cdots>n_r>0\\ n_2 < N}}\frac{z_2^{n_2}\cdots z_r^{n_r}}{n_2^{s_2}\cdots n_r^{s_r}}\\
&=\frac{z_{1}^{M}}{(M-1)^{s_1-1}}\sum_{\substack{ M-1>n_2>\cdots>n_r>0}}\frac{z_2^{n_2}\cdots z_r^{n_r}}{n_2^{s_2}\cdots n_r^{s_r}}.
\end{align*}
Hence, collecting all the terms, the left-hand side of equation \eqref{3} becomes
\begin{align*}
       &z_1\sum_{\substack{ M-1>n_2>\cdots>n_r>0\\ n_2\geq N}}\frac{z_{[1,2]}^{n_2}z_3^{n_3}\cdots z_r^{n_r}}{n_2^{s_1+s_2-1}n_3^{s_3}\cdots n_r^{s_r}}+(z_1-1)\sum_{\substack{ M>n_1>\cdots>n_r>0\\ n_1\geq N}}\frac{z_1^{n_1}z_2^{n_2}\cdots z_r^{n_r}}{n_1^{s_1-1}n_2^{s_2}\cdots n_r^{s_r}}\\
      & + \frac{z_{1}^{N}}{(N-1)^{s_1-1}}\sum_{N>n_2>\cdots>n_r>0}\frac{ z_2^{n_2}\cdots z_r^{n_r}}{n_2^{s_2}\cdots n_r^{s_r}} - \frac{z_{1}^{M}}{(M-1)^{s_1-1}}\sum_{\substack{ M-1>n_2>\cdots>n_r>0}}\frac{z_2^{n_2}\cdots z_r^{n_r}}{n_2^{s_2}\cdots n_r^{s_r}},
\end{align*}
which is the left-hand side of \eqref{trans-tail-r}. This completes the proof.
\end{proof}

Setting
$$\delta_i=\begin{cases}
1 & \text{ if } z_i \neq 1,\\
0 & \text{ if } z_i = 1,
\end{cases}
$$
we can rewrite \eqref{trans-tail-1} and \eqref{trans-tail-r} in the following (combined) form which will be easier for us to use.
We have
\begin{align}\label{trans-tail-gen}
\begin{split}
&z_1 (1-[1/r]) t_{(z_{[1,2]},z_3, \ldots, z_r)}(s_1+s_2+\delta_1-1,s_3,\ldots,s_r)_{M-1,N} \\
&+ (z_1-1)t_{(z_1, \ldots, z_r)}(s_1+\delta_1-1,s_2,\ldots,s_r)_{M,N}\\
&+\frac{z_1^N}{(N-1)^{s_1+\delta_1-1}}t_{(z_2, \ldots, z_r)}(s_2,\ldots,s_r)_{N}
-\frac{z_1^M}{(M-1)^{s_1+\delta_1-1}}t_{(z_2, \ldots, z_r)}(s_2,\ldots,s_r)_{M-1}\\
&=\sum_{k\geq0}\frac{(s_1+\delta_1-1)_{k+1}}{(k+1)!}t_{(z_1, \ldots, z_r)}(s_1+\delta_1+k,s_2,\ldots,s_r)_{M,N}.
\end{split}
\end{align}
For the proof of \propref{prop-Urz}, we also need the following corollary of Lemma \ref{lem-trans-tail}.

\begin{cor}\label{cor-lem}
Let $r, M \ge 2$ be integers. For $(a_1,\ldots,a_r) \in U_r(\mathbf{z})$,
we have a polydisc $D$ around $(a_1,\ldots,a_r)$ and $\epsilon >0$ such that as $M  \to \infty$,
$$
\left\|\frac{z_1^M}{(M-1)^{s_1+\delta_1-1}}t_{(z_2,\ldots,z_r)}(s_2,\ldots,s_r)_{M}\right\|_{\overline D}=O(M^{-\epsilon}).
$$ 
\end{cor}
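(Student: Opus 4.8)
The plan is to estimate the two factors in the product separately and then combine. First I would control the Dirichlet polynomial $t_{(z_2,\ldots,z_r)}(s_2,\ldots,s_r)_M$ uniformly on a suitable polydisc. Since $(a_1,\ldots,a_r) \in U_r(\mathbf{z})$, the truncated tuple $(a_2,\ldots,a_r)$ sits in a region controlled by $q(\mathbf{z})$-type conditions relative to $\mathbf{z}' := (z_2,\ldots,z_r)$; in particular, by \thmref{big-dom} applied to $\mathbf{z}'$ (or directly by \lemref{tail-normal} together with \lemref{lem-trans-tail} in depth $r-1$), there is a polydisc $D'$ around $(a_2,\ldots,a_r)$ on which the full series $\mathrm{Li}_{(z_2,\ldots,z_r)}(s_2,\ldots,s_r)$ converges normally, hence $\|t_{(z_2,\ldots,z_r)}(s_2,\ldots,s_r)_M\|_{\overline{D'}}$ is bounded by a constant $C$ independent of $M$. (If instead $(a_2,\ldots,a_r)$ already lies in $U_{r-1}$ the bound is immediate from \lemref{normal}; the point is only that in all cases the partial sums stay bounded on a small polydisc.)

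Next I would bound the scalar factor $\left|\dfrac{z_1^M}{(M-1)^{s_1+\delta_1-1}}\right|$. Here $|z_1| \le 1$ gives $|z_1^M| \le 1$, so this factor is $O\!\bigl((M-1)^{-\Re(s_1)-\delta_1+1}\bigr)$, uniformly for $s_1$ in a small disc around $a_1$. The sign of the exponent is what the $U_r(\mathbf{z})$ hypothesis buys us: if $\delta_1 = 1$ (i.e.\ $z_1 \ne 1$, so $q(\mathbf{z}) = 1$ and the condition on $\Re(s_1)$ is $\Re(s_1) > 0$), then $\Re(s_1) + \delta_1 - 1 = \Re(s_1) > 0$; if $\delta_1 = 0$ (i.e.\ $z_1 = 1$, so $q(\mathbf{z}) \ge 2$ and the first condition of $U_r(\mathbf{z})$ reads $\Re(s_1) > 1$), then $\Re(s_1) + \delta_1 - 1 = \Re(s_1) - 1 > 0$. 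So in either case $a_1$ satisfies $\Re(a_1) + \delta_1 - 1 > 0$, and by shrinking the disc around $a_1$ we can fix $\epsilon > 0$ with $\Re(s_1) + \delta_1 - 1 \ge \epsilon$ throughout. Thus the scalar factor is $O(M^{-\epsilon})$.

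Finally I would take $D$ to be the polydisc whose first factor is the small disc around $a_1$ just chosen and whose remaining factors are $D'$ (shrinking further if necessary so that $D \subseteq U_r(\mathbf{z})$). On $\overline{D}$ the product is bounded by (scalar factor bound) $\times$ (Dirichlet polynomial bound) $= O(M^{-\epsilon}) \cdot C = O(M^{-\epsilon})$, which is the claim.

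The only real subtlety is the very first step: one must know that $(a_2,\ldots,a_r)$ lies in the open domain $U_{r-1}(\mathbf{z}')$ (or at least that the partial sums $t_{(z_2,\ldots,z_r)}(\cdot)_M$ are uniformly bounded near it), and this is where the precise form of the defining inequalities of $U_r(\mathbf{z})$ in \eqref{Urz} matters. Concretely, one checks that for $1 \le i \le r-1$ the condition on $\Re(a_2 + \cdots + a_{i+1})$ coming from $(a_1,\ldots,a_r) \in U_r(\mathbf{z})$, combined with $\Re(a_1) > \delta_1 - 1 \ge -1$ and the relation between $q(\mathbf{z})$ and $q(\mathbf{z}')$ (the latter shifts appropriately according to whether $z_1 = 1$), yields exactly the inequality defining $U_{r-1}(\mathbf{z}')$ for the shifted tuple — possibly after absorbing a small loss into the radius of $D'$. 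This bookkeeping is routine but is the heart of the argument; once it is in place, the rest is the elementary two-factor estimate above.
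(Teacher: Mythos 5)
There is a genuine gap, and it sits exactly at the point you flag as "routine bookkeeping": the factor $t_{(z_2,\ldots,z_r)}(s_2,\ldots,s_r)_{M}$ is \emph{not} uniformly bounded in $M$ on any polydisc around $(a_2,\ldots,a_r)$ in general, because $(a_1,\ldots,a_r)\in U_r(\mathbf{z})$ imposes no useful lower bound on the truncated partial sums $\Re(a_2+\cdots+a_{i+1})$ — these equal $\Re(a_1+\cdots+a_{i+1})-\Re(a_1)$, and $\Re(a_1)$ can be arbitrarily large inside $U_r(\mathbf{z})$. Concretely, take $r=2$, $\mathbf{z}=(1,-1)$, so $U_2(1,-1)=\{\Re(s_1)>1,\ \Re(s_1+s_2)>1\}$, and $(a_1,a_2)=(2,-1/2)$. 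Then $t_{(-1)}(s_2)_M=\sum_{M>n>0}(-1)^n n^{1/2}$ at $s_2=-1/2$ grows like a constant times $M^{1/2}$, so your constant $C$ does not exist; the corollary still holds only because the scalar factor $1/(M-1)^{\Re(s_1)+\delta_1-1}=1/(M-1)$ decays fast enough to beat that growth. Your two-factor estimate, which multiplies an $O(M^{-\epsilon})$ scalar bound by an alleged $O(1)$ bound, therefore cannot close. (Your computation that $\Re(s_1)+\delta_1-1>0$ on $U_r(\mathbf{z})$ is correct, but that alone is not enough.)

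The paper's proof couples the two factors instead of separating them. It argues by induction on $r$, applying the translation identity \eqref{trans-tail-gen} in the variables $s_2,\ldots,s_r$ with $N=2$ and multiplying the whole identity by $z_1^M/(M-1)^{s_1+\delta_1-1}$. The boundary terms are handled by the induction hypothesis (after checking that the shifted points $(a_1,a_2+a_3+\delta_2-1,a_4,\ldots,a_r)$ and $(a_1+a_2+\delta_1+\delta_2-1,a_3,\ldots,a_r)$ lie in the appropriate lower-depth domains), and the key analytic step is the inequality $(M-1)^{-(\Re(s_1)+\delta_1-1-\epsilon)}\le n_2^{-(\Re(s_1)+\delta_1-1-\epsilon)}$ for $n_2<M$, which transfers the surplus decay in $M$ onto the summation variable $n_2$ and turns the series into one controlled by \lemref{normal} via the condition $\Re(s_1+s_2)+\delta_1+\delta_2>2+\epsilon$. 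Some such mechanism for trading $M$-decay against the possible polynomial growth of the inner partial sums is unavoidable; without it your argument fails at the first step.
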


\begin{proof}
We prove the result by induction on $r$. For $r=2$, consider the identity \eqref{trans-tail-1} with $N=2$
and for the variable $s_2$:
$$
(z_2-1) t_{z_2}(s_2+\delta_2-1)_{M,2} + z_2^2 - \frac{z_2^M}{(M-1)^{s_2+\delta_2-1}}
=\sum_{k \ge 0}\frac{(s_2+\delta_2-1)_{k+1}}{(k+1)!} t_{z_2}(s_2+\delta_2+k)_{M,2}.
$$
We multiply the above equation by $\frac{z_1^M}{(M-1)^{s_1+\delta_1-1}}$ to have
\begin{align*}
\begin{split}
&\frac{z_1^M(z_2-1)}{(M-1)^{s_1+\delta_1-1}} t_{z_2}(s_2+\delta_2-1)_{M,2} + \frac{z_1^Mz_2^2}{(M-1)^{s_1+\delta_1-1}}
- \frac{z_1^Mz_2^M}{(M-1)^{s_1+s_2+\delta_1+\delta_2-2}}\\
&=\sum_{k \ge 0}\frac{(s_2+\delta_2-1)_{k+1}}{(k+1)!} \frac{z_1^M}{(M-1)^{s_1+\delta_1-1}} t_{z_2}(s_2+\delta_2+k)_{M,2}.
\end{split}
\end{align*}
Note that as $(a_1,a_2) \in U_2(z_1,z_2)$, on a (small) polydisc $D$ around $(a_1,a_2)$ and 
for some suitable $\epsilon>0$, we have $\Re(s_1)+\delta_1>1+\epsilon$ and
$\Re(s_1+s_2)+\delta_1+\delta_2>2+\epsilon$ for all $(s_1,s_2) \in D$. Hence, we have $\epsilon>0$ such that
$$
\left \| \frac{z_1^M z_2^2}{(M-1)^{s_1+\delta_1-1}} \right \|_{\overline D},
\left \| \frac{z_1^Mz_2^M}{(M-1)^{s_1+s_2+\delta_1+\delta_2-2}}\right \|_{\overline D} \ll \frac{1}{M^\epsilon}.
$$
Hence, for the same $\epsilon>0$, by \lemref{normal} we have
\begin{align*}
\begin{split}
& \left\|\frac{z_1^M}{(M-1)^{s_1+\delta_1-1}}t_{z_2}(s_2)_{M,2}\right\|_{\overline D} \\ 
&\ll \frac{1}{M^\epsilon} + \sum_{k \ge 1-\delta_2} \left\| \frac{(s_2+\delta_2-1)_{k+1}}{(k+1)!} \frac{z_1^M}{(M-1)^{s_1+\delta_1-1}} t_{z_2}(s_2+\delta_2+k)_{M,2} \right\|_{\overline D}\\
& \ll \frac{1}{M^\epsilon} + \frac{1}{M^{\epsilon}} \sum_{k \ge 1-\delta_2} \sum_{M> n_2 \ge 2}\left\| \frac{(|s_2|+\delta_2+1)_{k+1}}{(k+1)!}
\frac{1}{(M-1)^{\Re(s_1)+\delta_1-1-\epsilon}} \frac{1}{n_2^{\Re(s_2)+\delta_2+k}} \right\|_{\overline D}\\
& \ll \frac{1}{M^\epsilon}+ \frac{1}{M^{\epsilon}} \sum_{k \ge 1-\delta_2} \sum_{M> n_2 \ge 2}\left\| \frac{(|s_2|+\delta_2+1)_{k+1}}{(k+1)!}
\frac{1}{n_2^{\Re(s_1+s_2)+\delta_1+\delta_2-1-\epsilon+k}} \right\|_{\overline D} \ll \frac{1}{M^\epsilon}.
\end{split}
\end{align*}
In this case we note that $t_{(z_2)}(s_2)_{M}-t_{(z_2)}(s_2)_{M,2}=z_2^2$. Hence
$$
 \left\|\frac{z_1^M}{(M-1)^{s_1+\delta_1-1}}t_{z_2}(s_2)_{M}\right\|_{\overline D} \ll \frac{1}{M^\epsilon}.
$$
Now we suppose $r>2$. Note that
$t_{(z_2,\ldots,z_r)}(s_2,\ldots,s_r)_{M}=t_{(z_2,\ldots,z_r)}(s_2,\ldots,s_r)_{M,2}$.
Here we consider the identity \eqref{trans-tail-gen} with $N=2$, for the variable $s_2,\ldots,s_r$,
and then multiply $\frac{z_1^M}{(M-1)^{s_1+\delta_1-1}}$ on both the sides to get
\begin{align}\label{trans-tail-gen-2}
\begin{split}
&\frac{z_1^Mz_2}{(M-1)^{s_1+\delta_1-1}} t_{(z_{[2,3]},z_4, \ldots, z_r)}(s_2+s_3+\delta_2-1,s_4,\ldots,s_r)_{M-1,2}\\
&+ \frac{z_1^M(z_2-1)}{(M-1)^{s_1+\delta_1-1}} t_{(z_2, \ldots, z_r)}(s_2+\delta_2-1,s_3,\ldots,s_r)_{M,2}\\
&+\frac{z_1^Mz_2^2}{(M-1)^{s_1+\delta_1-1}} t_{(z_3, \ldots, z_r)}(s_3,\ldots,s_r)_{2}
-\frac{z_1^Mz_2^M}{(M-1)^{s_1+s_2+\delta_1+\delta_2-2}}t_{(z_3, \ldots, z_r)}(s_3,\ldots,s_r)_{M-1}\\
&=\sum_{k\geq0}\frac{(s_2+\delta_2-1)_{k+1}}{(k+1)!}\frac{z_1^M}{(M-1)^{s_1+\delta_1-1}}t_{(z_2, \ldots, z_r)}(s_2+\delta_2+k,s_3,\ldots,s_r)_{M,2}.
\end{split}
\end{align}
Simple computations show that if $(a_1,\ldots,a_r) \in U_r({\bf z})$, then $(a_1,a_2+a_3+\delta_2-1,a_4, \ldots, a_r) \in U_{r-1}(z_1,z_{[2,3]},z_4,\ldots,z_r)$
and we also have $(a_1+a_2 +\delta_1+\delta_2-1,a_3, \ldots, a_r) \in U_{r-1}(z_{[1,2]},z_3,\ldots,z_r)$. This by the induction hypothesis
allows us to find polydiscs $D_1,D_2$ around the last two points, and $\epsilon_1,\epsilon_2>0$, respectively, such that
$$
\left\| \frac{z_1^Mz_2}{(M-1)^{s_1+\delta_1-1}} t_{(z_{[2,3]},z_4, \ldots, z_r)}(s_2+s_3+\delta_2-1,s_4,\ldots,s_r)_{M-1,2}\right\|_{\overline D_1} \ll \frac{1}{M^{\epsilon_1}},
$$
and
$$
\left\| \frac{z_1^Mz_2^M}{(M-1)^{s_1+s_2+\delta_1+\delta_2-2}}  t_{(z_3, \ldots, z_r)}(s_3,\ldots,s_r)_{M-1}\right\|_{\overline D_2} \ll \frac{1}{M^{\epsilon_2}}.
$$
Now we choose an $\epsilon>0$ such that $\epsilon<\epsilon_1,\epsilon_2$ and a polydisc $D$ around $(a_1,\ldots,a_r)$ such that for all
$(s_1,\ldots,s_r) \in D$, we have $\Re(s_1)+\delta_1>1+\epsilon$, $\Re(s_1+s_2)+\delta_1+\delta_2>2+\epsilon$ and moreover
$(s_1,s_2+s_3+\delta_2-1,s_4, \ldots, s_r) \in D_1$ and $(s_1+s_2 +\delta_1+\delta_2-1,s_3, \ldots, s_r) \in D_2$. For this $\epsilon>0$,
we note that
\begin{align*}
\begin{split}
&\sum_{k \ge 1-\delta_2} \left\| \frac{(s_2+\delta_2-1)_{k+1}}{(k+1)!} \frac{z_1^M}{(M-1)^{s_1+\delta_1-1}}
t_{z_2}(s_2+\delta_2+k,s_3,\ldots,s_r)_{M,2} \right\|_{\overline D}\\
& \ll \frac{1}{M^{\epsilon}} \sum_{k \ge 1-\delta_2} \sum_{M> n_2 > n_3 > \cdots > n_r>0}\left\| \frac{(|s_2|+\delta_2+1)_{k+1}}{(k+1)!}
\frac{1}{(M-1)^{\Re(s_1)+\delta_1-1-\epsilon}} \frac{1}{n_2^{\Re(s_2)+\delta_2+k}n_3^{\Re(s_3)}\cdots n_r^{\Re(s_r)}} \right\|_{\overline D}\\
& \ll \frac{1}{M^{\epsilon}} \sum_{k \ge 1-\delta_2} \sum_{M> n_2  > n_3 > \cdots > n_r>0}\left\| \frac{(|s_2|+\delta_2+1)_{k+1}}{(k+1)!}
\frac{1}{n_2^{\Re(s_1+s_2)+\delta_1+\delta_2-1-\epsilon+k}n_3^{\Re(s_3)}\cdots n_r^{\Re(s_r)}} \right\|_{\overline D} \ll \frac{1}{M^\epsilon},
\end{split}
\end{align*}
by \lemref{normal}. This, together with \eqref{trans-tail-gen-2}, implies that
$$
\left\|\frac{z_1^M}{(M-1)^{s_1+\delta_1-1}}t_{(z_2,\ldots,z_r)}(s_2,\ldots,s_r)_{M}\right\|_{\overline D}=O(M^{-\epsilon}).
$$
\end{proof}

We are now ready to prove \propref{prop-Urz}.

\begin{proof}[Proof of \propref{prop-Urz}]
We will prove the statement by induction on $r$. Let $r=1$. For this, we write the
identity \eqref{trans-tail-1} as
\begin{align*}
&(z_1-1)t_{z_1}(s_1+\delta_1-1)_{M,N}+ \frac{z_1^N}{(N-1)^{s_1+\delta_1-1}} - \frac{z_1^M}{(M-1)^{s_1+\delta-1}} \\
&=\sum_{k\geq0}\frac{(s_1-1)_{k+1}}{(k+1)!} t_{z_1}(s_1+\delta_1+k)_{M,N}.
\end{align*}
Since $a_1 \in U_1(z_1)$, we have $a_1+\delta_1>1$. Hence by \lemref{tail-normal}, there exists a disc $D$ around $a_1$ and $\epsilon>0$
such that as $N \to \infty$, the sum
$$
\sum_{k\geq 1-\delta_1}\left\|\frac{(s_1+\delta_1-1)_{k+1}}{(k+1)!} t_{z_1}(s_1+\delta_1+k)_{M,N}\right\|_{\overline D} \ll \frac{1}{N^{\epsilon}}.
$$ 
Note that choosing $D$ suitably small, we can also ensure that, as $N \to \infty$,
$$
\left\|\frac{z_1^N}{(N-1)^{s_1+\delta_1-1}} \right\|_{\overline D}, \left\|\frac{z_1^M}{(M-1)^{s_1+\delta_1-1}} \right\|_{\overline D} \ll \frac{1}{N^{\epsilon}}.
$$ 
This therefore implies that $\|t_{z_1}(s_1)_{M,N}\|_{\overline D} = O(N^{-\epsilon})$ as $N \to \infty$.
Now for $r \ge 2$, we use \eqref{trans-tail-gen}. We first note that if $(a_1,\ldots,a_r) \in U_r({\bf z})$, then
$(a_1+a_2+\delta_1-1,a_3, \ldots,a_r) \in U_{r-1}(z_{[1,2]},z_3,\ldots,z_r)$. Using the induction hypothesis, we can find
a polydisc $D_1$ around the point $(a_1+a_2+\delta_1-1,a_3, \ldots,a_r)$ and $\epsilon_1>0$ such that
$$
\left\| t_{(z_{[1,2]},z_3, \ldots, z_r)}(s_1+s_2+\delta_1-1,s_3,\ldots,s_r)_{M-1,N}\right\|_{\overline D_1} \ll \frac{1}{N^{\epsilon_1}}.
$$
Now we find a polydisc $D$ around $(a_1,\ldots,a_r)$ such that for all the points $(s_1,\ldots,s_r) \in D$, we have
$(s_1+s_2 +\delta_1-1,s_3, \ldots, s_r) \in D_1$. Choosing $D$ small enough, by \corref{cor-lem}, we can further ensure that
for some $0<\epsilon<\epsilon_1$, we have as $N \to \infty$,
$$
\left\|\frac{z_1^N}{(N-1)^{s_1+\delta_1-1}}t_{(z_2, \ldots, z_r)}(s_2,\ldots,s_r)_{N}\right\|_{\overline D},
\left\|\frac{z_1^M}{(M-1)^{s_1+\delta_1-1}}t_{(z_2, \ldots, z_r)}(s_2,\ldots,s_r)_{M-1} \right\|_{\overline D} \ll \frac{1}{N^{\epsilon}}.
$$
Moreover, by \lemref{tail-normal}, we have as $N \to \infty$,
$$
\sum_{k\geq1-\delta_1} \left\|  \frac{(s_1+\delta_1-1)_{k+1}}{(k+1)!}t_{(z_1, \ldots, z_r)}(s_1+\delta_1+k,s_2,\ldots,s_r)_{M,N}\right\|_{\overline D} \ll \frac{1}{N^{\epsilon}}.
$$
Hence using \eqref{trans-tail-gen}, we finally deduce that as $N \to \infty$,
$$
\|t_{(z_1,\ldots,z_r)}(s_1,\ldots,s_r)_{M,N}\|_{\overline D}=O(N^{-\epsilon}).
$$
This completes the proof of \propref{prop-Urz}.
\end{proof}

With this the proof of \thmref{big-dom} is complete. In the next section, we prove \thmref{ad-dom}.

\section{Proof of \thmref{ad-dom}}
For $u_n$ as in \thmref{Aysm}, we prove \thmref{ad-dom} by estimating the order of the asymptotic expansion of the sequence
$(u_n)_{n\geq 1}$ relative to $\mathcal{E}$ with coefficients in $\mathcal{C}$.
We first prove the following lemma where $z=z_1^{k_1}\cdots z_r^{k_r}\ne 1$ for some $k_1,\ldots,k_r \in \N$.
 
\begin{lem}\label{r=1}
Let $a \in \Z$ and $P(X) \in \C[X]$. Then the order of the asymptotic expansion of the sequence $(v_n)_{n\geq 1}$
relative to $\mathcal{E}$ with coefficients in $\mathcal{C}$, where the sequence $(v_n)_{n\geq 1}$ defined by
\begin{equation}\label{r1}
v_n:= \sum_{m=1}^{n-1}\frac{z^m P(\log m)}{m^a},
\end{equation}
is at least $\min(0, a)$.
\end{lem}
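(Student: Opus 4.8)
The plan is to separate the statement into a qualitative part --- that $(v_n)_{n\ge1}$ has an asymptotic expansion at all --- and a quantitative part --- a crude growth bound that pins the order from above, the decisive input being that $z\neq1$. For the qualitative part, I would write $P(X)=\sum_j c_jX^j$ and note that $u_m:=z^mP(\log m)/m^a=\sum_j c_j\,z^m(\log m)^j m^{-a}$ is literally an asymptotic expansion to arbitrary precision relative to $\mathcal{E}$ with coefficients in $\mathcal{C}$: its only nonzero coefficients are $a_m^{(j,a)}=c_j z^m\in\mathcal C$ for $0\le j\le\deg P$, and since $|z|=1$ one has $u_m=o(m^{-A})$ for every integer $A<a$. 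Hence $v_n=\sum_{n>m>0}u_m$ has an asymptotic expansion to arbitrary precision relative to $\mathcal{E}$ with coefficients in $\mathcal{C}$ by \propref{partial sum}; equivalently, this is the expansion produced by the proof of that proposition through the generalised Euler--Boole summation formula of Appendix~A. If this expansion is identically zero the claimed bound on the order is trivial, so I may assume its order $m_0$ is well defined.

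For the quantitative part, set $g(t):=P(\log t)/t^a$ and $T_M:=\sum_{m=1}^{M}z^m$; since $z$ is a root of unity with $z\neq1$ one has $|T_M|\le 2/|1-z|=:B$ for all $M$. Abel summation gives
$$
v_n=\sum_{m=1}^{n-1}z^m g(m)=T_{n-1}g(n-1)-\sum_{m=1}^{n-2}T_m\big(g(m+1)-g(m)\big),
$$
and by the mean value theorem $g(m+1)-g(m)=g'(\xi_m)$ for some $\xi_m\in(m,m+1)$, with $g'(t)=\widetilde P(\log t)/t^{a+1}$ for a polynomial $\widetilde P$ of degree $\le\deg P$. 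Thus
$$
|v_n|\le B\,|g(n-1)|+B\sum_{2\le m\le n-2}|g'(\xi_m)|\ll n^{-a}(\log n)^{\deg P}+\sum_{2\le m\le n-2}\frac{(\log m)^{\deg P}}{m^{a+1}}.
$$
Splitting into the cases $a\ge1$ (the series converges), $a=0$ (the partial sum is $\ll(\log n)^{\deg P+1}$) and $a\le-1$ (the partial sum is $\ll n^{-a}(\log n)^{\deg P}$), one obtains in every case a bound of the shape $|v_n|\ll n^{\max(0,-a)}(\log n)^{D}$ with $D:=\deg P+1$.

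Finally I would read off the order. Suppose, for contradiction, that $m_0<\min(0,a)$, i.e.\ $m_0\le-\max(0,-a)-1$, and let $l_0$ be the largest $l$ with $(a_n^{(l,m_0)})_n$ not identically zero. Every other term of the expansion, as well as a sufficiently deep truncation error, is of strictly smaller order of magnitude than $(\log n)^{l_0}n^{-m_0}$, so $v_n=a_n^{(l_0,m_0)}(\log n)^{l_0}n^{-m_0}(1+o(1))$. The sequence $(a_n^{(l_0,m_0)})\in\mathcal C$ is nonzero, hence not $o(1)$ by property~(b) of $\mathcal C$, so there are $c>0$ and a subsequence $(n_k)$ with $|a_{n_k}^{(l_0,m_0)}|\ge c$, whence $|v_{n_k}|\ge\tfrac{c}{2}(\log n_k)^{l_0}n_k^{-m_0}\ge\tfrac{c}{2}\,n_k^{\max(0,-a)+1}$ for large $k$ --- contradicting the bound of the previous paragraph. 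Therefore $m_0\ge\min(0,a)$, as claimed. The routine part is the three-case estimate of $v_n$; the step needing genuine care is this last one, because the coefficients $a_n^{(l,m)}$ are oscillating sequences in $\mathcal C$ rather than constants, and it is exactly the combination of boundedness and property~(b) of $\mathcal C$ that lets one pass to a subsequence and thereby turn the growth bound into a statement about the order.
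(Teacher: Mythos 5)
Your proof is correct, but it takes a genuinely different route from the paper's. The paper reduces to a monomial $P$ and then reads the order directly off the generalised Euler--Boole summation formula \eqref{MFormula}: the boundary sums there contribute a constant (order $0$) plus terms in $\zeta^n f(n+t)$ and $\zeta^n f^{(i)}(n)$ of order at least $a$, and the remainder integral is absorbed, giving order at least $\min(0,a)$ together with the explicit shape of the expansion (this explicit shape is what Remark~\ref{zn1} then records). You instead split the claim into existence of the expansion --- which you get from \propref{partial sum}, i.e.\ ultimately from the same formula \eqref{MFormula} --- and a separate a priori growth bound $|v_n|\ll n^{\max(0,-a)}(\log n)^{\deg P+1}$ obtained by Abel summation from the boundedness of $\sum_{m\le M}z^m$ (here the hypothesis $z\neq 1$, stated just before the lemma, is essential, and you use it correctly). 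The conversion of that growth bound into a bound on the order is the only delicate point, and you handle it properly: uniqueness of the expansion plus property (b) of $\mathcal C$ gives a subsequence along which the putative leading coefficient is bounded away from zero, so a hypothetical term of order $\le\min(0,a)-1$ would force $|v_{n_k}|\gg n_k^{\max(0,-a)+1}$, contradicting the Abel bound. What each approach buys: the paper's is shorter given the appendix machinery and identifies \emph{which} terms occur (constant plus $z^n$-coefficient terms starting at $n^{-a}$), information the growth bound alone cannot recover; yours is more elementary in the quantitative step and makes explicit the passage from a size estimate to a statement about the order, which the paper leaves implicit. Your phrase ``$v_n=a_n^{(l_0,m_0)}(\log n)^{l_0}n^{-m_0}(1+o(1))$'' is slightly loose since the coefficient sequence may be small or zero for some $n$, but your subsequent subsequence argument only uses a lower bound along the chosen subsequence, so nothing is lost.
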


\begin{proof} Note that it is enough to prove the statement when $P(X)$ is a monomial. 
For $k \in \N$, the order of the asymptotic expansion of the sequence $\left(\frac{(\log n)^{k}}{n^a}\right)_{n\geq1}$ (relative to
$\mathcal{E}$ with coefficients in $\mathcal{C}$) is $a$ and for the sequence $\frac{d}{dx}\frac{(\log x)^{k}}{x^a}|_{x=n}$
is $a+1$. Hence, by the generalised Euler-Boole summation formula \eqref{MFormula} in Appedinx \ref{GEBS},
the order of the asymptotic expansion of the sequence $\left(v_n\right)_{n\geq 1}$ is at least $\min(0, a).$
\end{proof}

\begin{rmk}\label{zn1}\rm
For $v_n$ as in \eqref{r1}, by applying \lemref{r=1} and \thmref{Aysm}, we see that for any integer $A\geq a$ and $0<\epsilon<1$, we have,
as $n\rightarrow \infty$,
\begin{align*}
v_n=c+\frac{z^nP_{0}(\log n)}{n^{a}}+\frac{z^nP_{1}(\log n)}{n^{a+1}}+\cdots+\frac{z^nP_{A-a}(\log n)}{n^A}+o\left(\frac{1}{n^{A+\epsilon}}\right),
\end{align*}
where $c\in \C$ and $P_{i}(X) \in \C[X]$ for all $0\leq i \leq A-a.$
\end{rmk}

\begin{rmk}\label{Rzeta} \rm
It is worthwhile to recall \cite[Remark 6]{BS2} that for $a \in \Z$ and $Q(X) \in \C[X]$,
the order of the asymptotic expansion of the sequence $(v_n)_{n\geq 1}$ relative to $\mathcal{E}$ with coefficients in $\mathcal{C}$, where
$$
v_n:=\sum_{m=1}^{n-1}\frac{Q(\log m)}{m^a},
$$
is at least min$(0,a-1)$ and
for any integer $A\geq a-1$ and $0<\epsilon<1$, we have, as $n\rightarrow \infty$,
\begin{align*}
v_n=d+\frac{Q_{0}(\log n)}{n^{a-1}}+\frac{Q_{1}(\log n)}{n^{a}}+\cdots+\frac{Q_{A-a+1}(\log n)}{n^A}+o\left(\frac{1}{n^{A+\epsilon}}\right),
\end{align*}
where $d\in \C$ and $Q_{i}(X) \in \C[X]$  for all $0\leq i \leq A-a+1$.
\end{rmk}


To write the proof inductively, we need some notations similar to \eqref{I-Q}. For $1\le i \le j \le r$, define
$$
q_{[i,j]}=q_{[i,j]}({\bf z}):=
\begin{cases}
1 &\text{ if } z_{[i,j]}=1,\\
0 &\text{ if } z_{[i,j]}\neq 1
\end{cases}
\ \text{ and } \ Q_{[i,j]}=Q_{[i,j]}({\bf z}):=q_{[i,j]}+\cdots+q_{[j,j]}.
$$
Note that taking $i=1$, we get $Q_{[1,j]}=Q_j({\bf z})$, as in \eqref{I-Q}.
We set $Q_{[i,j]}=0$ if $1 \le j<i \le r$.
Further, for $(a_1,\ldots,a_r)\in \Z^r$, set $A_{[i,j]}:=a_i+\cdots+a_j$ for $1\leq i\leq j\leq r$.
In this context, we prove the following proposition which enables us to complete the proof of \thmref{ad-dom}.

\begin{prop}\label{Extension}
Let $r\geq 1$ be an integer. Let $(u_n)_{n\geq 1}$ be the sequence defined as in \thmref{Aysm}. Then the order of the asymptotic
expansion of the sequence $(u_n)_{n\geq 1}$, relative to $\mathcal{E}$ with coefficients in $\mathcal{C}$, has order at least
$\min \left(0, A_{[1,1]}-Q_{[1,1]},A_{[1,2]}-Q_{[1,2]},\ldots, A_{[1,r]}-Q_{[1,r]}\right)$.
More precisely, for sufficiently large integer $A$ and $0<\epsilon<1$, we have, as $n\rightarrow \infty$,
\begin{equation}\label{mp-der-exp}
u_n= c +\sum_{i=1}^{r}\sum_{j=0}^{A-A_{[1,i]}+Q_{[1,i]}}\frac{F_{(i,j)}(\log n)}{n^{A_{[1,i]}-Q_{[1,i]}+j}}z_{[1,i]}^{n}
+o\left(\frac{1}{n^{A+\epsilon}}\right),
\end{equation}
where $c\in \C$ and $F_{i,j}(X) \in \C[X]$ for all $i, j$ as above.
\end{prop}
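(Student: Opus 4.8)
The plan is to argue by induction on the depth $r$. The base case $r=1$ is exactly \lemref{r=1} together with \rmkref{zn1} when $z_1\ne1$ (in which case $q_{[1,1]}=Q_{[1,1]}=0$, so the leading denominator exponent is $a_1=A_{[1,1]}-Q_{[1,1]}$) and \rmkref{Rzeta} when $z_1=1$ (in which case $q_{[1,1]}=Q_{[1,1]}=1$, so the leading exponent is $a_1-1=A_{[1,1]}-Q_{[1,1]}$); in both cases the stated order and the shape \eqref{mp-der-exp} are immediate, with $z_{[1,1]}^{\,n}$ being $z_1^n$ or $1$ accordingly.

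For $r\ge2$ I would peel off the outermost summation index. Writing $n_1=m$ gives
$$
u_n=\sum_{m=1}^{n-1}\frac{z_1^m(\log m)^{k_1}}{m^{a_1}}\,w_m,
\qquad
w_m:=\sum_{m>n_2>\cdots>n_r>0}\frac{z_2^{n_2}(\log n_2)^{k_2}\cdots z_r^{n_r}(\log n_r)^{k_r}}{n_2^{a_2}\cdots n_r^{a_r}},
$$
so that $(w_m)_{m\ge1}$ is the sequence of \thmref{Aysm} attached to the data $(z_2,\ldots,z_r)$, $(a_2,\ldots,a_r)$, $(k_2,\ldots,k_r)$. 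Applying the induction hypothesis to $(w_m)$, for an arbitrarily large integer $A'$ one has a finite expansion
$$
w_m=c'+\sum_{i=2}^{r}\ \sum_{j\ge0}\frac{G_{(i,j)}(\log m)}{m^{A_{[2,i]}-Q_{[2,i]}+j}}\,z_{[2,i]}^{\,m}+o\!\left(\frac1{m^{A'+\epsilon}}\right),
$$
with $c'\in\C$ and $G_{(i,j)}\in\C[X]$.

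Next I would substitute this into the formula for $u_n$ and handle the pieces one at a time. The constant term contributes $c'\sum_{m=1}^{n-1}z_1^m(\log m)^{k_1}m^{-a_1}$, which by \rmkref{zn1} (if $z_1\ne1$) or \rmkref{Rzeta} (if $z_1=1$) is a constant plus terms $z_{[1,1]}^{\,n}\,P(\log n)\,n^{-(A_{[1,1]}-Q_{[1,1]}+\ell)}$, $\ell\ge0$ (using $A_{[1,1]}=a_1$ and $Q_{[1,1]}=q_{[1,1]}$). For $2\le i\le r$ and each $j$, the piece indexed by $(i,j)$ contributes a scalar multiple of
$$
\sum_{m=1}^{n-1}\frac{z_{[1,i]}^{\,m}\,\widetilde P_{(i,j)}(\log m)}{m^{\,a_1+A_{[2,i]}-Q_{[2,i]}+j}},
\qquad
\widetilde P_{(i,j)}(X):=X^{k_1}G_{(i,j)}(X)\in\C[X],
$$
where $z_{[1,i]}=z_1z_{[2,i]}$; by \rmkref{zn1} (if $z_{[1,i]}\ne1$) or \rmkref{Rzeta} (if $z_{[1,i]}=1$), using $A_{[1,i]}=a_1+A_{[2,i]}$ and the identity $Q_{[1,i]}=q_{[1,i]}+Q_{[2,i]}$, the leading denominator exponent is $a_1+A_{[2,i]}-Q_{[2,i]}+j-q_{[1,i]}=A_{[1,i]}-Q_{[1,i]}+j$, so this contributes a constant plus terms $z_{[1,i]}^{\,n}\,P(\log n)\,n^{-(A_{[1,i]}-Q_{[1,i]}+j+\ell)}$, $\ell\ge0$. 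Finally the remainder $o(m^{-A'-\epsilon})$, multiplied by $z_1^m(\log m)^{k_1}m^{-a_1}$ and summed over $1\le m<n$, contributes a constant plus $o(n^{-A-\epsilon})$ once $A'$ is chosen large enough in terms of $A$ and $a_1$ — this is precisely the argument used for case (2) in the proof of \propref{partial sum}. Collecting everything and relabelling, the only denominator exponents that occur (besides the constant term) are $A_{[1,i]}-Q_{[1,i]}+\ell$ for $1\le i\le r$, $\ell\ge0$, each attached to $z_{[1,i]}^{\,n}$ and a polynomial in $\log n$; truncating at precision $n^{-A-\epsilon}$ yields \eqref{mp-der-exp}, and in particular the order is at least $\min(0,A_{[1,1]}-Q_{[1,1]},\ldots,A_{[1,r]}-Q_{[1,r]})$.

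I expect the only real obstacle to be the bookkeeping in the inductive step: one must verify the identity $Q_{[1,i]}=q_{[1,i]}+Q_{[2,i]}$ and notice that the extra $-1$ shift produced by \rmkref{Rzeta} in the case $z_{[1,i]}=1$ is exactly cancelled by the contribution $q_{[1,i]}=1$ to $Q_{[1,i]}$, so that both cases $z_{[1,i]}=1$ and $z_{[1,i]}\ne1$ deliver the same exponent $A_{[1,i]}-Q_{[1,i]}+j$; and one must keep the precision $A'$ of the inductive hypothesis uniform enough (a function of $A$ and $a_1$) to absorb the summed remainder. Once \lemref{r=1}, \rmkref{zn1}, \rmkref{Rzeta} and \propref{partial sum} are granted, each individual step is routine.
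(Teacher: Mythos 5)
Your proposal is correct and follows essentially the same route as the paper: induction on the depth, peeling off the outermost index, applying the inductive expansion to the inner sum, and then using \lemref{r=1} together with Remarks~\ref{zn1} and~\ref{Rzeta} on each resulting single sum, with the identity $Q_{[1,i]}=q_{[1,i]}+Q_{[2,i]}$ doing exactly the bookkeeping you describe. Your phrasing of the remainder step (``choose $A'$ large enough in terms of $A$ and $a_1$'') is, if anything, slightly more careful than the paper's explicit choice $A'=A+a_1-1$.
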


Note that \propref{Extension} extends \cite[Remark 7]{BS2} for the series defining the multiple polylogarithms.
\begin{proof}[Proof of \propref{Extension}]
We prove this by induction on $r$. When $r=1$, the statement follows from Remarks \ref{zn1} and \ref{Rzeta}.
Now we assume that $r\geq 2$. Let $A\geq 1$ be a given positive integer. Consider the sequence $(v_{n_1})_{n_1\geq 1}$ defined by 
$$
v_{n_1}:=\sum_{n_1> n_2>\cdots>n_r>0}\frac{z_2^{n_2}\log^{k_2} n_2\cdots z_r^{n_r}\log^{k_r} n_r}{n_2^{a_2}\cdots n_r^{a_r}}.
$$
Now by the induction hypothesis, corresponding to $(z_2,\ldots,z_r)$, we get that for $A'=A+a_1-1$ and $0<\epsilon<1$, we have,
as $n_1\rightarrow \infty$,
\begin{align*}
v_{n_1}=c_1+\sum_{i=2}^{r}\sum_{j=0}^{A'+Q_{[2,i]}-A_{[2,i]}}\frac{G_{(i,j)}(\log n_1)}{n_1^{A_{[2,i]}-Q_{[2,i]}+j}}z_{[2,i]}^{n_1}+
o\left(\frac{1}{n_1^{A'+\epsilon}}\right),
\end{align*}
where $c_1 \in \C$ and $G_{(i,j)}(X) \in \C[X]$ for all $i, j$. Let $(w_{n_1})_{n_1\geq 1}$ be the sequence
$w_{n_1}=\frac{z_1^{n_1}(\log n_1)^{k_1}}{n_1^{a_1}}$. Then 
$u_n=\sum_{n_1=1}^{n-1}(v_{n_1}\cdot w_{n_1})$ where 
$$
v_{n_1}\cdot w_{n_1}=c_1\frac{z_1^{n_1}(\log n_1)^{k_1}}{n_1^{a_1}}
+\sum_{i=2}^{r}\sum_{j=0}^{A'+Q_{[2,i]}-A_{[2,i]}}\frac{(\log n_1)^{k_1}G_{(i,j)}(\log n_1)}{n_1^{A_{[2,i]}-Q_{[2,i]}+j+a_1}}
z_{[1,i]}^{n_1} + o(n_1^{-A'+a_1-\epsilon}).
$$
Note that $A_{[2,i]}+a_1=A_{[1,i]}$. Now if we write $X^{k_1}G_{(i,j)}(X)$ as $H_{(i,j)}(X)$, then
%
to get the desired asymptotic expansion of $u_n$ (relative to $\mathcal{E}$ with coefficients in $\mathcal{C}$),
we need to look at the  asymptotic expansion of the sequence of the form,
 $$
\sum_{n_1=1}^{n-1}z_{[1,i]}^{n_1}\frac{H_{(i,j)}(\log n_1)}{n_1^{A_{[1,i]}-Q_{[2,i]}+j}} 
$$
for each $i,j$ as above, together with for $(i,j)=(1,0)$ where $H_{(1,0)}(\log n_1):=c_1(\log n_1)^{k_1}$,
as $\sum_{n_1=1}^{n-1}o(n_1^{-A'+a_1-\epsilon})=c_0+o(n^{-A'+a_1-1-\epsilon})=c_0+o(n^{-A-\epsilon})$,
for some $c_0 \in \C$.

Now for each $1\leq i\leq r$, $q_{[1,i]}$ is a value that depends on the value of the product $z_{[1,i]}$,
namely, $q_{[1,i]}=0$ if $z_{[1,i]} \ne 1$ and $q_{[1,i]}=1$ if $z_{[1,i]} = 1$. So using 
Remarks \ref{zn1} and \ref{Rzeta},
we see that, as $n \to \infty$, each of the above sums is of the form
$$
c_{i,j} + z_{[1,i]}^n \sum_{k=0}^{A-A_{[1,i]}+Q_{[1,i]}-j} 
\frac{H_{(i,j,k)}(\log n)}{n^{A_{[1,i]}-Q_{[2,i]}-q_{[1,i]}+j+k}} + o(n^{-A-\epsilon}),
$$
for $c_{i,j} \in \C$ and $H_{(i,j,k)}(X) \in \C[X]$. Summing for $i,j$ and collecting the terms for $j+k=l$, we get
that, as $n \to \infty$,
$$
u_n=c+\sum_{i=1}^{r}\sum_{l=0}^{A-A_{[1,i]}+Q_{[1,i]}}
\frac{F_{(i,l)}(\log n)}{n^{A_{[1,i]}-Q_{[1,i]}+l}}z_{[1,i]}^{n}+o\left(\frac{1}{n^{A+\epsilon}}\right),
$$
where $c\in \C$ and
$$
F_{(i,l)}(X)= \sum_{j=0}^l H_{(i,j,l-j)}(X).
$$
This completes the proof.
\end{proof}

\begin{proof}[Proof of \thmref{ad-dom}]
As an immediate corollary, from \eqref{mp-der-exp} we see that if 
$A_{[1,i]}-Q_{[1,i]}>0$ for all $1\leq i\leq r$, then the sequence $(u_n)_{n\geq 1}$ in \thmref{Aysm} is convergent.
This completes the proof of \thmref{ad-dom}.
\end{proof}

\begin{ex}\rm
If $(z_1,z_2)=(1,-1)$, then $Q_{[1,1]}=1$ and $Q_{[1,2]}=q_{[1,2]}+q_{[2,2]}=0$. We have
$$
V_2((1,-1))=\{(s_1,s_2)\in \C^2: \Re(s_1)>1,  \Re(s_1+s_2)>0\}.
$$ 
The two variable multiple polylogarithm 
$$
\mathrm{Li}(s_1, s_r;1, -1)=\sum_{n_1>n_2>0}\frac{(-1)^{n_2}}{n_1^{s_1}n_2^{s_2}}
$$
is convergent for $(s_1,s_2)\in V_2((1,-1))\cap \Z^2$. The set $V_2((1,-1))$ is somewhat optimal
for the series $\sum_{n_1>n_2>0}\frac{(-1)^{n_2}}{n_1^{s_1}n_2^{s_2}}$ to converge, as for $N \to \infty$, we have
$$
\sum_{N>n_1>n_2>0}\frac{(-1)^{n_2}}{n_1^{2}n_2^{-2}}=-\frac{\log 2}{2}+
\frac{1}{4}+\frac{(-1)^N}{4}+o(1).
$$
\end{ex}

{\bf Acknowledgements:}
The research of the first author is supported by PMRF (grant number: 1402688, cycle 10).


%
%
%

\newpage

\appendix

\section{Generalised Euler-Boole summation formula}\label{GEBS}

Let $k\geq 1$ be a positive integer and $\zeta$ be a primitive $k^{th}$ root of unity. For $n\in \N$, large enough, the aim
of this appendix is to establish a formula that can be used to estimate a sum of the form $\sum_{a=1}^{n-1}\zeta^af(a)$
for a given `well behaved' function $f$ defined on $[1,n]$, in terms of derivatives and integrals related to $f$.

When $\zeta=1$, we have the well-known \textit{Euler-Maclaurin summation formula} \cite[2.10]{PEP},
\begin{align*}
\sum_{i=1}^{n-1}f(i)=&\int_{1}^{n}f(t)dt+\sum_{j=1}^{m}\frac{B_{j}}{j!}\left(f^{(j-1)}(n)-f^{(j-1)}(1)\right)+\frac{(-1)^{m+1}}{m!}\int_{1}^{n}B_{m}(\{t\})f^{(m)}(t)dt,
\end{align*}
where $B_{j}(x)$ is the $j^{th}$-\textit{Bernoulli polynomial} given by the generating series
$$
\frac{t e^{xt}}{e^t-1}=\sum_{n \ge 0} B_n(x) \frac{t^n}{n!},
$$
with $B_{j}$ denoting the $j^{th}$ \textit{Bernoulli number} given by $B_j=B_j(0)$ and $f(x)$ is an $m$-times continuously differentiable function on $[1,n]$.
 
Similarly, for $\zeta=-1$, one has the \textit{Euler-Boole summation formula} \cite[24.17]{PEP}, 

\begin{align*}
\sum_{i=1}^{n-1}(-1)^if(i)=&\frac{1}{2}\sum_{j=0}^{m-1}\frac{E_j(0)}{j!}\left(-f^{(j)}(1)-(-1)^{n}f^{(j)}(n)\right)
+\frac{1}{2(m-1)!}\int_{1}^{n}f^{(m)}(t)\widetilde E_{m-1}(-t)dt,
\end{align*}
where $E_j(0)$ is the value at $0$ of the \textit{Euler polynomial} $E_j(x)$, given by the generating series
\begin{align*}
\frac{2e^{xt}}{1+e^{t}}=\sum_{n=0}^{\infty} E_{n}(x)\frac{t^n}{n!},
\end{align*}
and $\widetilde E_{n}(x)$ is the \textit{periodic Euler polynomial} defined by $\widetilde E_{n}(x+1)=-\widetilde E_n(x)$ and $\widetilde E_n(x)=E_{n}(x)$ for $0\leq x<1$ (see \cite[24.2]{PEP}).

To prove such a formula for $\sum_{a=1}^{n-1}\zeta^af(a)$ where $\zeta$ is a primitive $k^{th}$
root of unity $\zeta$ for $k\geq 2$, we follow the exposition by Borwein, Calkin and Manna \cite{BCM}. 
We first extend the notion of Euler polynomials. For an integer $k\geq 2$,
we first consider the generating series
\begin{align}
\frac{ke^{xt}}{1+e^{t}+\cdots+e^{(k-1)t}}=\sum_{n=0}^{\infty} E_{k,n}(x)\frac{t^n}{n!},
\end{align}
where $E_{k,n}(x)$ are polynomials in $x$. We call these polynomials as the  \textit{generalised Euler polynomials}.
Note that when $k=2$, we get back the Euler polynomials $E_n(x)$ defined above.
Now for a given primitive $k^{th}$ root of unity $\zeta$, define the corresponding \textit{periodic generalised Euler polynomial}
by setting $\widetilde E_{k,n}(x):=E_{k,n}(x)$ for all $0\leq x<1$ and
$\widetilde E_{k,n}(x+m):=\zeta^{-m}\widetilde E_{k,n}(x)$ for all $x\in \R$ and $m\in \Z$.
This is analogous to the definition of periodic Euler polynomial, when $k=2$ and $\zeta=-1.$ 

To state our theorem, we need some notations. For $k\geq 2$ an integer and integers
$1\leq i\leq j\leq k-1$, consider the vectors 
\begin{align*}
\mathbf{v}_{i,j}:=\left(\zeta^j,\zeta^{j-1},\ldots,\zeta^i\right) \text{ and }
\mathbf{w}_{i,j}:=\left(\frac{i}{k}-1,\frac{i+1}{k}-1,\ldots,\frac{j}{k}-1\right)
\end{align*}
in the inner product space $\C^{j-i+1}$ with the standard inner product $\langle \cdot , \cdot \rangle$. So
$$
\langle \mathbf{v}_{i,j},\mathbf{w}_{i,j}\rangle
=\zeta^j\left(\frac{i}{k}-1\right)+\zeta^{j-1}\left(\frac{i+1}{k}-1\right)+\cdots+\zeta^i\left(\frac{j}{k}-1\right).
$$

\begin{mthm}\label{Mthm}
Let $k\geq 2$ be an integer and $\zeta$ be a primitive $k^{th}$ root of unity. For integers $m \ge 1, n\ge k$, let $f$ be a
complex valued function on $[1,n]$ whose first $m$ many derivatives are absolutely integrable on $[1,n]$.
Then we have the following summation formula:
\begin{align}\label{MFormula}
\begin{split}
\sum_{a=1}^{n-1}\zeta^af(a)&=\frac{1}{k}\left(\sum_{t=1}^{k-1}f(t)\sum_{a=1}^{t}\zeta^a+\zeta^n\sum_{t=0}^{k-2}f(n+t)\sum_{a=t+1}^{k-1}\zeta^{a}\right)\\
&+\sum_{i=1}^{k-2}\langle \mathbf{v}_{1,i},\mathbf{w}_{1,i}\rangle \ \Delta[f](i)
+\zeta^n\sum_{i=2}^{k-1}\langle \mathbf{v}_{i,k-1},\mathbf{w}_{i,k-1}\rangle \ \Delta[f](i+n-2)\\
&+\langle \mathbf{v}_{1,k-1},\mathbf{w}_{1,k-1}\rangle
\sum_{i=1}^{m-1}\frac{E_{k,i}(0)}{i!}\left\{\zeta^{k-1} f^{(i)}(k-1)-\zeta^{n}f^{(i)}(n) \right\}\\
&+\langle \mathbf{v}_{1,k-1},\mathbf{w}_{1,k-1}\rangle
\frac{1}{(m-1)!}\int_{k-1}^{n}\widetilde E_{k,m-1}\left(k-x\right)f^{(m)}(x)dx,
\end{split}
\end{align}
where $\Delta[f](x)=f(x+1)-f(x)$ is the difference operator.
\end{mthm}

Taking $k=2$ and $\zeta=-1$ in \eqref{MFormula}, we get
$\langle \mathbf{v}_{1,1},\mathbf{w}_{1,1}\rangle=\frac{1}{2}$ and hence we get back the
Euler-Boole summation formula, as stated above. Hence, we refer to \eqref{MFormula} as a
\textit{generalised Euler-Boole summation formula}.

\begin{proof}
To prove our theorem, we begin by recalling the general setup from \cite{BCM}. Let $g$ be an absolutely 
continuous probability density function with finite moments. In particular,
$$
\int_{-\infty}^{\infty}g(u)du=1\ \ \  \text{ and }\ \ \ \int_{-\infty}^{\infty}|u|^kg(u)du<\infty \text{ for all } k\in \N.
$$
From now onward, we will denote $\int_{-\infty}^{\infty}$ by $\int$.
Consider the Strodt operator $S_g$ associated with the probability density function $g$ on the
space of complex-valued functions given by
$$
S_g(f)(x):=\int f(x+u)g(u)du.
$$
For an integer $n\geq 0$, let $P_n(x)$ be the corresponding Strodt polynomial for $S_g$, i.e.,
$S_g(P_n)(x)=x^n$ for all $x\in \R$. 
For $1 \le a <n$, a sufficiently smooth function $f$ and a fixed integer $m\geq 0$, define the remainder
$$
R_m(a):=f(a)-\sum_{n=0}^{m}\frac{S_g(f^{(n)})(a)}{k!}P_{n}(0).
$$
As remarked in \cite{BCM}, the process of deriving a summation formula essentially reduces
to finding an expression for $R_m(a)$ as an integral involving the Strodt polynomials $P_n(x)$,
corresponding to the Strodt operator $S_g$.
As $P_0(x)=1$ and $\int g(u)du=1$, we have
\begin{align*}
R_0(a)&=f(a)-\int f(a+u)g(u)du
           =\int (f(a)-f(a+u))g(u)du\\
           &=\int \int_{u}^{0}f'(a+s)g(u)dsdu
           =\int V(s)f'(a+s)ds,
\end{align*}
where
\[ 
V(s):= \left\{
  \begin{array}{lr} 
      \int_{-\infty}^{s}g(u)du & \text{ for } s<0,  \\
       \int_{-\infty}^{s}g(u)du-1 & \text{ for } s\geq 0.
      \end{array}
\right.
\]
Hence  we have
\begin{equation}\label{fa}
f(a)=S_g(f)(a)+\int V(s)f'(a+s)ds.
\end{equation}

For our proof, we consider the Strodt operator $S_g$ associated to the probability weight function
$$
g(u):=\frac{\delta_0(u)+\cdots+\delta_{k-1}(u)}{k}.
$$
So, here the Strodt operator is 
\begin{align}\label{SP}
S_g(f)(x)=\frac{f(x)+f(x+1)+\cdots+f(x+k-1)}{k},
\end{align}
and the generalised Euler polynomials $E_{k,n}(x)$ are the corresponding Strodt polynomials.
Now we have
\begin{align}\label{VS}
\begin{split}
 V(s)&=\left\{
    \begin{array}{ll}
    \frac{1}{k}-1  &  \text{ for } 0\leq s<1,\\
     \frac{2}{k} -1&     \text{ for } 1\leq s<2,\\
     \vdots \\
      \frac{k-1}{k} -1&    \text{ for }  k-2\leq s<k-1,\\
       0          & \text {otherwise}.
    \end{array}
\right.\\
\end{split}
\end{align}
 Then from (\ref{fa}), we get,
\begin{align*}
f(a)=\frac{1}{k}\sum_{i=0}^{k-1}f(a+i)+\int V(s)f'(a+s)ds
 &=\frac{1}{k}\sum_{i=0}^{k-1}f(a+i)+\int_{0}^{k-1} V(s)f'(a+s)ds\\
&= \frac{1}{k}\sum_{i=0}^{k-1}f(a+i)+\int_{a}^{a+k-1} V(x-a)f'(x)dx.
\end{align*} 
We multiply the above expression by $\zeta^a$ and sum for $1 \le a \le n-1$ to get
\begin{equation}\label{a+1}
\sum_{a=1}^{n-1}\zeta^af(a)=\frac{1}{k}\sum_{a=1}^{n-1}\sum_{i=0}^{k-1}\zeta^af(a+i)+
\sum_{a=1}^{n-1}\zeta^a\int_{a}^{a+k-1} V(x-a)f'(x)dx.
\end{equation} 
We begin with the sum,
\begin{align*}
\sum_{a=1}^{n-1}\sum_{i=0}^{k-1}\zeta^af(a+i)&=\sum_{t=1}^{n+k-2}f(t)\sum_{\substack{a+i=t, \\1\leq a\leq n-1,\\ 0\leq i\leq k-1}}\zeta^a\\
&=\sum_{t=1}^{k-1}f(t)\sum_{a=1}^{t}\zeta^a+\sum_{t=k}^{n-1}f(t)\sum_{a=t-k+1}^{t}\zeta^a+\sum_{t=n}^{n+k-2}f(t)\sum_{a=t-k+1}^{n-1}\zeta^a.
\end{align*}
Note that, $\sum_{a=t-k+1}^{t}\zeta^a=\zeta^{t-k+1}(1+\zeta+\cdots+\zeta^{k-1})=0$,
as $\zeta$ is a $k^{th}$ root of unity. Hence the second sum in the right-hand side of the above equation vanishes. Moreover,
in the last sum, replacing $t$ by $n+t$, we have
 \begin{align}\label{subsum}
\sum_{a=1}^{n-1}\sum_{i=0}^{k-1}\zeta^af(a+i)
&=\sum_{t=1}^{k-1}f(t)\sum_{a=1}^{t}\zeta^a+\sum_{t=0}^{k-2}f(n+t)\sum_{a=t+n-k+1}^{n-1}\zeta^a \nonumber \\
&=\sum_{t=1}^{k-1}f(t)\sum_{a=1}^{t}\zeta^a+\zeta^n\sum_{t=0}^{k-2}f(n+t)\sum_{a=t+1}^{k-1}\zeta^{a}.
\end{align}

Now we consider the last term on the right-hand side of \eqref{a+1}. For $0 \le i \le k-2$, if $x \in [a+i,a+i+1)$, we have
$V(x-a)= \frac{i+1}{k}-1$. So,
\begin{align}\label{VS1}
\begin{split}
&\sum_{a=1}^{n-1}\zeta^a \int_{a}^{a+k-1} V(x-a)f'(x)dx\\
&=\sum_{a=1}^{n-1}\zeta^a\left(\int_{a}^{a+1}\left(\frac{1}{k}-1\right)f'(x)dx+\cdots+
\int_{a+k-2}^{a+k-1}\left(\frac{k-1}{k}-1\right)f'(x)dx\right).
\end{split}
\end{align}
Note that in \eqref{VS1}, we have $\Delta[f](i)=\int_{i}^{i+1}f'(x)dx$ for $1\leq i\leq n+k-3$.
We will compute the coefficient of each of these integrals. 
We make three cases: (a) $1\leq i\leq k-2$, (b) $k-1\leq i\leq n-1$ and (c) $n\leq i\leq n+k-3$.\\
\textbf{Case (a).} In this case, the coefficient of $\Delta[f](i)$ in \eqref{VS1} is
$$
\zeta^i\left(\frac{1}{k}-1\right)+\zeta^{i-1}\left(\frac{2}{k}-1\right)+\cdots+\zeta\left(\frac{i}{k}-1\right)
=\langle \mathbf{v}_{1,i},\mathbf{w}_{1,i}\rangle.
$$
\textbf{Case (b).} In this case, the coefficient of the integral $\int_{i}^{i+1}f'(x)dx$ in \eqref{VS1} is
\begin{align*}
&\zeta^i\left(\frac{1}{k}-1\right)+\zeta^{i-1}\left(\frac{2}{k}-1\right)+\cdots+\zeta^{i-(k-2)}\left(\frac{k-1}{k}-1\right)\\
=&\zeta^{i-(k-1)}\left\{\zeta^{k-1}\left(\frac{1}{k}-1\right)+\zeta^{k-2}\left(\frac{2}{k}-1\right)+\cdots+\zeta\left(\frac{k-1}{k}-1\right)\right\}\\
=&\zeta^{i+1}\langle \mathbf{v}_{1,k-1},\mathbf{w}_{1,k-1}\rangle.
\end{align*}
\textbf{Case (c).} In this case,  the coefficient of $\Delta[f](i)$ in \eqref{VS1} is
\begin{align*}
&\zeta^{n-1}\left(\frac{i-n+2}{k}-1\right)+\zeta^{n-2}\left(\frac{i-n+3}{k}-1\right)+\cdots+\zeta^{i-k+2}\left(\frac{k-1}{k}-1\right)\\
=&\zeta^{n}\left\{\zeta^{k-1}\left(\frac{i-n+2}{k}-1\right)+\zeta^{k-2}\left(\frac{i-n+3}{k}-1\right)+\cdots+\zeta^{i-n+2}\left(\frac{k-1}{k}-1\right)\right\}\\
=&\zeta^n\langle \mathbf{v}_{i-n+2,k-1},\mathbf{w}_{i-n+2,k-1}\rangle.
\end{align*}
Hence \eqref{VS1} becomes
\begin{align}\label{VS2}
\begin{split}
&\sum_{a=1}^{n-1}\zeta^a \int_{a}^{a+k-1} V(x-a)f'(x)dx\\
&=\sum_{i=1}^{k-2}\langle \mathbf{v}_{1,i},\mathbf{w}_{1,i}\rangle \ \Delta[f](i)
+\langle \mathbf{v}_{1,k-1},\mathbf{w}_{1,k-1}\rangle\sum_{i=k-1}^{n-1}\int_{i}^{i+1}\zeta^{i+1}f'(x)dx\\
& \ \ + \zeta^n\sum_{i=n}^{n+k-3}\langle \mathbf{v}_{i-n+2,k-1},\mathbf{w}_{i-n+2,k-1}\rangle \ \Delta[f](i).
\end{split}
\end{align}
Clearly,
\begin{align}\label{Midstep}
\begin{split}
\zeta^n\sum_{i=n}^{n+k-3}\langle \mathbf{v}_{i-n+2,k-1},\mathbf{w}_{i-n+2,k-1}\rangle \ \Delta[f](i)
=\zeta^n\sum_{i=2}^{k-1}\langle \mathbf{v}_{i,k-1},\mathbf{w}_{i,k-1}\rangle \ \ \Delta[f](i+n-2).
\end{split}
\end{align}
Now we will deal with the term $\sum_{i=k-1}^{n-1}\int_{i}^{i+1}\zeta^{i+1}f'(x)dx$ in \eqref{VS2},
using the general theory of finite Strodt operator and Strodt polynomials as in \cite{BCM}.
We compute the periodic generalised Euler polynomial $\widetilde E_{k,0}(x)$. For $i\in \Z$, we have
\begin{align*}
\widetilde E_{k,0}\left(k-x\right)=\widetilde E_{k,0}\left(k-(1+i)+1+i-x\right)
                                                    =\zeta^{1+i-k}\widetilde E_{k,0}\left(1+i-x\right)
                                                    =\zeta^{i+1}\widetilde E_{k,0}\left(1+i-x\right).
\end{align*}
If $x \in (i,i+1)$, we have $1+i-x \in (0,1)$. Therefore $\widetilde E_{k,0}\left(1+i-x\right)=E_{k,0}\left(1+i-x\right)=1$
for $x \in (i,i+1)$. Hence for $x \in (i,i+1)$,
\begin{align*}
\widetilde E_{k,0}\left(k-x\right)=\zeta^{i+1}.
\end{align*}
Hence we have
\begin{align}\label{3T}
\begin{split}
\sum_{i=k-1}^{n-1}\int_{i}^{i+1}\zeta^{i+1}f'(x)dx&=\sum_{i=k-1}^{n-1}\int_{i}^{i+1}\widetilde E_{k,0}\left(k-x\right)f'(x)dx
 =\int_{k-1}^{n}\widetilde E_{k,0}\left(k-x\right)f'(x)dx.
 \end{split}
\end{align}
Now we recall \cite[Theorem 2.3]{BCM} that the Strodt polynomials satisfy the differential equation
\begin{align*}
\frac{d}{dx}E_{k,n}(x)=nE_{k,n-1}(x),
\end{align*}
for $n \ge 1$. So this implies that,
\begin{align}\label{DE}
\frac{d}{dx}E_{k,n}(k-x)=-nE_{k,n-1}(k-x).
\end{align}
Hence for an integer $m\geq 1$, using \eqref{DE} for the periodic generalised Euler polynomials
and integrating by parts, we get
\begin{align}\label{IBP}
\begin{split}
\int_{k-1}^{n}\widetilde E_{k,m-1}\left(k-x\right)f^{(m)}(x)dx=&\frac{1}{m}\left\{\widetilde E_{k,m}\left(1\right)f^{(m)}(k-1)-\widetilde E_{k,m}\left(k-n\right)f^{(m)}(n)\right\}\\
&+\frac{1}{m}\int_{k-1}^{n}\widetilde E_{k,m}\left(k-x\right)f^{(m+1)}(x)dx.
\end{split}
\end{align}
Since $\widetilde E_{k,m}\left(k-n\right)=\zeta^{n-k}E_{k,m}\left(0\right)=\zeta^{n}E_{k,m}\left(0\right)$ and $\widetilde E_{k,m}\left(1\right)=\zeta^{-1} E_{k,m}\left(0\right)$, \eqref{IBP} becomes
\begin{align}\label{IBP2}
\begin{split}
\int_{k-1}^{n}\widetilde E_{k,m-1}\left(k-x\right)f^{(m)}(x)dx=
&\frac{E_{k,m}(0)}{m}\left\{\zeta^{k-1} f^{(m)}(k-1)-\zeta^{n}f^{(m)}(n)\right\}\\
&+\frac{1}{m}\int_{k-1}^{n}\widetilde E_{k,m}\left(k-x\right)f^{(m+1)}(x)dx.
\end{split}
\end{align}
Applying \eqref{IBP2} repeatedly, \eqref{3T} becomes
\begin{align}\label{4T}
\begin{split}
\sum_{i=k-1}^{n-1}\int_{i}^{i+1}\zeta^{i+1}f'(x)dx&=
\sum_{i=1}^{m-1}\frac{E_{k,i}(0)}{i!}\left\{\zeta^{k-1} f^{(i)}(k-1)-\zeta^{n}f^{(i)}(n) \right\}\\
&+\frac{1}{(m-1)!}\int_{k-1}^{n}\widetilde E_{k,m-1}\left(k-x\right)f^{(m)}(x)dx.
 \end{split}
\end{align}
Now in \eqref{a+1}, we apply \eqref{subsum} and \eqref{VS2}, and use \eqref{Midstep} and \eqref{4T}, to get the desired 
formula \eqref{MFormula}. This completes the proof of the theorem.
\end{proof}

\end{document}